\providecommand{\U}[1]{\protect\rule{.1in}{.1in}}
\newtheorem{theorem}{Theorem}[section]
\newtheorem{corollary}[theorem]{Corollary}
\newtheorem{lemma}[theorem]{Lemma}
\newtheorem{proposition}[theorem]{Proposition}
\newtheorem{remark}[theorem]{Remark}
\def\Y_#1{\boldsymbol{Y}_{\!#1}}
\definecolor{darkred}{rgb}{0.9,0.1,0.1}
\newenvironment{proof}[1][Proof]{\noindent\textbf{#1.} }{\
  \rule{0.5em}{0.5em}}
\begin{document}

\title{ Randomized optimal stopping algorithms and their convergence analysis}
\author{Christian Bayer, Denis Belomestny, Paul Hager,\\Paolo Pigato, John Schoenmakers}
\maketitle

\begin{abstract}
In this paper we study randomized optimal stopping problems and consider corresponding forward and backward  Monte Carlo based optimisation algorithms. In
particular we prove the convergence of the proposed algorithms and derive
the corresponding convergence rates. \\[0.5em]{\scriptsize \textit{Keywords:}
randomized optimal stopping, convergence rates}\\[0.5em]%
{\scriptsize \textit{Subject classification:} 60J05 (65C30, 65C05)}

\end{abstract}

\section{Introduction}

Optimal stopping problems play an important role in quantitative finance, as
some of the most liquid options are of American or Bermudan type, that is, they allow the
holder to exercise the option at any time before some terminal time or on a
finite, discrete set of exercise times, respectively. Mathematically, the price
of an American or Bermudan option is, hence, given as the solution of the
optimal stopping problem
\[
\sup_{\tau\in\mathcal{T}} \mathsf{E} Z_{\tau},
\]
where $Z_{t}$ denotes the discounted payoff to the holder of the option when
exercising at time $t$, and $\mathcal{T}$ denotes the set of all stopping
times taking values in $[0,T]$ in the American case and the set of stopping
times taking values in the set of exercise dates $\{t_{0}, \ldots, t_{J} \}$ in
the Bermudan case. Here $\mathsf{E}$ stands for the expectation w.r.t.~some risk
neutral measure. In this paper, we restrict ourselves to the Bermudan case,
either because the option under consideration is of Bermudan type or because
we have already discretized the American option in time.

\subsection{Background}

\label{sec:literature-review}

Due to the fundamental importance of optimal stopping in finance and
operations research, numerous numerical methods have been suggested. If the
dimension of the underlying driving process is high then deterministic methods
become inefficient. As a result most state-of-the-art methods are based on the
dynamic programming principle combined with Monte Carlo. This class includes
regression methods (local or global) \cite{J_LS2001,B1}, mesh methods \cite{BG}
and optimization based Monte Carlo algorithms
\cite{andersen1999simple,belomestny2011rates}. Other approaches include 
the quantization method \cite{Bally} and stochastic policy iteration \cite{J_KS2006}
for example.  
While the above methods aim at constructing (in general suboptimal) policies, hence lower estimations of the optimal stopping problem, the dual
approach  independently initiated by \cite{J_Rogers2002} and \cite{J_HK2004} 
has led to a stream of developments for computing upper bounds of the stopping problem (see, for
instance, \cite{BSbook} and the references therein).
\par
In this paper, we revisit optimization-based Monte Carlo (OPMC) algorithms and
extend them to the case of randomized stopping times. The idea behind OPMC methods
is to maximize a MC estimate of the associated value function over a family of
stopping policies thus approximating the early exercise region associated to
the optimal stopping problem rather than the value function. This idea goes back 
to \cite{A}. For a more general formulation see for instance \cite{J_Schoen2005}, Ch. 5.3, 
 and \cite{belomestny2011rates}  for a theoretical analysis.  
One problem of 
OPMC algorithms is that the corresponding loss functions are usually very
irregular, as was observed in \cite{belomestny2011rates} and \cite{BTW18}. In
order to obtain smooth optimization problems, the authors in
\cite{belomestny2016optimal} and \cite{BTW18} suggested to relax the problem
by randomizing the set of possible stopping times. For example, in the continuous exercise case, it was
suggested in \cite{BTW18} to stop at the first jump time of a Poisson process
with time and state dependent rate. The advantage of this approach is that the
resulting optimization problem becomes smooth. In general the solution of the
randomized optimal stopping problem coincides with the solution of the
standard optimal stopping problem, as earlier observed in
\cite{gyongy2008randomized}. 
\par
Let us also mention the recent works
\cite{becker2018deep,becker2019solving} that use deep neural networks to
solve optimal stopping problems numerically. These papers show very good
performance of deep neural networks for solving optimal stopping problems,
especially in high dimensions. However a complete convergence analysis of these
approaches is still missing.
A key issue in \cite{becker2018deep,becker2019solving} is 
a kind of smoothing of the functional representations of 
exercise boundaries or policies in order to make them suited for the standard gradient based 
optimization algorithms in the neural network based framework.  
In fact, we demonstrate that the randomized stopping provides a nice theoretical framework for such
smoothing techniques.   As such our results, in particular Corollary~\ref{Neur},  can be interpreted as a theoretical justification
of the neural network based methods in  \cite{becker2018deep,becker2019solving}. 
\par
Summing up, the contribution of this paper is twofold. On the one hand, we propose 
general OPMC methods that use randomized stopping times, instead of the
ordinary ones, thus leading to smooth optimization problems. On the other hand, we
provide a thorough convergence analysis of the proposed algorithms that
justify the use of randomized stopping times.
\par
The structure of the paper is as follows. In
Section~\ref{sec:rand-optim-stopp} we introduce the precise probabilistic
setting. In the following Section~\ref{sec:monte-carlo-optim} we introduce the
forward and the backward Monte Carlo methods. Convergence rates for both
methods are stated and proved in Section~\ref{conv}. In Section~\ref{numsec}
we describe a numerical implementation and present some numerical results for
the Bermudan max-call. Finally, there is an appendix with technical proofs
presented in Section~\ref{sec:proofs-auxil-results} and with a reminder on the
theory of empirical processes in Section~\ref{sec:some-results-from}.

\section{Randomized optimal stopping problems}

\label{sec:rand-optim-stopp}

Let $(\Omega,\mathcal{F},(\mathcal{F}_{j})_{j\geq0})$ be a given filtered
probability space, and $(\Omega_{0},\mathcal{B})$ be some auxiliary space that
is \textquotedblleft rich enough\textquotedblright\ in some sense. A
randomized stopping time $\tau$ is defined as a mapping from $\widetilde
{\Omega}:=\Omega\times\Omega_{0}$ to $\mathbb{N}$ (including $0$) that is
measurable with respect to the $\sigma$-field $\widetilde{\mathcal{F}}%
:=\sigma\left\{  F\times B:F\in\mathcal{F},\text{ }B\in\mathcal{B}\right\}  ,$
and satisfies
\[
\{\tau\leq j\}\in\widetilde{\mathcal{F}}_{j}:=\sigma\left\{  F\times
B:F\in\mathcal{F}_{j},\text{ }B\in\mathcal{B}\right\}  ,\text{ \ \ }%
j\in\mathbb{N}.
\]
While abusing notation a bit, $\mathcal{F}$ and $\mathcal{F}_{j}$ are
identified with $\sigma\left\{  F\times\Omega_{0}:F\in\mathcal{F}\right\}
\subset\widetilde{\mathcal{F}}$ and $\sigma\left\{  F\times\Omega_{0}%
:F\in\mathcal{F}_{j}\right\}  \subset\widetilde{\mathcal{F}}_{j},$
respectively. Let further $\mathsf{P}$ be a given probability measure on
$(\Omega,\mathcal{F}),$ and $\widetilde{\mathsf{P}}$ be its extension to $(\widetilde
{\Omega},\widetilde{\mathcal{F}})$ in the sense that%
\[
\widetilde{\mathsf{P}}\left(  \Omega_{0}\times F\right)  =\mathsf{P}\left(
F\right)  \text{ \ \ for all \ \ }F\in\mathcal{F}.
\]
In this setup we may think of $\mathsf{P}$ as the measure governing the
dynamics of some given adapted nonnegative reward process $(Z_{j})_{j\geq0}$
on $(\Omega,\mathcal{F},(\mathcal{F}_{j})_{j\geq0}).$ We then may write (with
$\mathsf{E}$ denoting the expectation with respect to the \textquotedblleft
overall measure\textquotedblright\ $\widetilde{\mathsf{P}}$)
\begin{equation}
\mathsf{E}\left[  Z_{\tau}\right]  =\mathsf{E}\left[  \sum_{j=0}^{\infty}%
Z_{j}p_{j}\right]  \label{eq:exp-rand}%
\end{equation}
with%
\[
p_{j}:=\mathsf{E}\left[  \left.  1_{\left\{  \tau=j\right\}  }\right\vert
\mathcal{F}_{j}\right]  =\widetilde{\mathsf{P}}\left(  \left.  \tau
=j\right\vert \mathcal{F}_{j}\right)  .
\]
Hence the sequence of nonnegative random variables $p_{0},p_{1},\ldots,$ is
adapted to $(\mathcal{F}_{j})_{j\geq0}$ and satisfies%
\[
\sum_{j=0}^{\infty}p_{j}=\sum_{j=0}^{\infty}\mathsf{E}\left[  \left.
1_{\left\{  \tau=j\right\}  }\right\vert \mathcal{F}\right]  =1\text{ \ a.s.}%
\]
In this paper we shall study discrete time optimal stopping problems of the
form
\begin{equation}
Y_{j}^{\star}=\sup_{\tau\in\widetilde{\mathcal{T}}[j,J]}\mathsf{E}[Z_{\tau
}|\mathcal{F}_{j}],\quad j=0,\ldots,J, \label{eq:opt-st-j}%
\end{equation}
where $\widetilde{\mathcal{T}}[j,J]$ is the set of randomized stopping times
taking values in $\{j,\ldots,J\}.$ It is well-known (see
\cite{belomestny2016optimal} and \cite{gyongy2008randomized}) that there
exists a family of ordinary stopping times $\tau_{j}^{\star},$ $j=0,\ldots,J,$
solving (\ref{eq:opt-st-j}) that satisfies the so-called consistency property
$\tau_{j}^{\star}>j\Longrightarrow\tau_{j}^{\star}=\tau_{j+1}^{\star}.$ That
is, at the same time,%
\[
Y_{j}^{\star}=\sup_{\tau\in\mathcal{T}[j,J]}\mathsf{E}[Z_{\tau}|\mathcal{F}%
_{j}],\quad j=0,\ldots,J
\]
where $\mathcal{T}[j,J]$ is the set of the (usual) $\mathcal{F}$-stopping
times. Studying \eqref{eq:opt-st-j} over a larger class of stopping times is
motivated by the fact that the set of randomized stopping times is convex, see
\cite{belomestny2016optimal}.
\par
From now on we consider the Markovian case with $Z_{j}=G_{j}(X_{j}),$ where
$(X_j)_{j\geq 0}$ is a Markov chain on $(\Omega,\mathcal{F},(\mathcal{F}_{j})_{j=0}^{J})$
with values in $\mathbb{R}^{d}$ and $(G_{j})_{j\geq 0}$ is a sequence of $\mathbb{R}%
^{d}\rightarrow\mathbb{R}_{+}$ functions. We also shall deal with consistent
families of randomized stopping times $(\tau_{j})_{j=0}^{J}$ satisfying
$j\leq\tau_{j}\leq J$ with $\tau_{J}=J$ and $\tau_{j}>j\Longrightarrow\tau
_{j}=\tau_{j+1}.$ Such a consistent family $(\tau_{k}),$ together with a
corresponding family of conditional exercise probabilities
\begin{equation}
p_{k,j}:=\mathsf{E}\left[  \left.  1_{\left\{  \tau_{k}=j\right\}
}\right\vert \mathcal{F}_{j}\right]  ,\quad j=k,\ldots,J, \label{conpr}%
\end{equation}
may be constructed by backward induction in the following way. We start with
$\tau_{J}=J$ almost surely and set $p_{J,J}=1$ reflecting the fact that, since $Z\geq0$
by assumption, stopping at $J$ is optimal provided one did not stop before
$J.$ Assume that $\tau_{k}$ with $k\leq\tau_{k}\leq J,$ and $p_{k,j},$
$j=k,\ldots,J,$ satisfying (\ref{conpr}), are already defined for some $k$
with $0<k\leq J.$

Next take some uniformly distributed random variable $\mathcal{U}\sim U[0,1],$
independent from $\mathcal{F}$ and $\tau_{k},$ and a function $h_{{k-1}}%
\in\mathcal{H}$ with $\mathcal{H}$ being a class of functions mapping
$\mathbb{R}^{d}$ to $[0,1].$ We then set
\begin{align*}
\tau_{k-1} =
\begin{cases}
k-1, & \mathcal{U}<h_{{k-1}}(X_{k-1}),\\
\tau_{k}, & \mathcal{U}\geq h_{{k-1}}(X_{k-1})
\end{cases}
\end{align*}
and
\begin{align}
\label{eq:p-recu}p_{k-1,j} =
\begin{cases}
h_{{k-1}}(X_{k-1}) & j=k-1\\
(1-h_{k-1}(X_{k-1}))\,p_{k,j} & j\geq k.
\end{cases}
\end{align}
Obviously, we then have for $j\geq k,$
\begin{align*}
\mathsf{E}\left[  \left.  1_{\left\{  \tau_{k-1}=j\right\}  }\right\vert
\mathcal{F}_{j}\right]   &  =\mathsf{E}\left[  1_{\left\{  \tau_{k}=j\right\}
}\mathsf{E}\left[  \left.  1_{\mathcal{U}\geq h_{{k-1}}(X_{k-1})}\right\vert
\mathcal{F}_{j}\vee\tau_{k}\right]  |\mathcal{F}_{j}\right] \\
&  =(1-h_{k-1}(X_{k-1}))\mathsf{E}\left[  \left.  1_{\left\{  \tau
_{k}=j\right\}  }\right\vert \mathcal{F}_{j}\right]  \,=p_{k-1,j}.
\end{align*}
That is, (\ref{conpr}) with $k$ replaced by $k-1$ is fulfilled.

It is immediately seen that, by the above construction, the thus obtained
family of randomized stopping times $(\tau_{k})_{k=0}^{J}$ is consistent, and
that%
\begin{equation}
\mathsf{E}[Z_{\tau_{k}}|\mathcal{F}_{k}]=\mathsf{E}\left[  \left.  \sum
_{j=k}^{\infty}Z_{j}p_{k,j}\right|  \mathcal{F}_{k}\right]  \label{randk}%
\end{equation}
with $h_{J}\equiv1$ by definition, and where%
\begin{equation}
p_{k,j}=h_{{j}}(X_{j})\prod_{l=k}^{j-1}(1-h_{{l}}(X_{l})),\text{
\ \ }j=k,\ldots,J. \label{rep}%
\end{equation}
Hence each (conditional) probability $p_{k,j}$ is a function of $X_{k}%
,\ldots,X_{j}$ by construction, and so in particular it is measurable with
respect to the $\sigma$- algebra $\mathcal{F}_{j}.$ In view of
\eqref{eq:exp-rand}, \eqref{eq:opt-st-j}, and (\ref{rep}), we now consider the
following optimization problems
\begin{equation}
\overline{Y}_{j}=\sup_{\boldsymbol{h}\in\mathcal{H}^{J-j}}\mathsf{E}\left[
\left.  \sum_{l=j}^{J}Z_{l}h_{{l}}(X_{l})\prod_{r=j}^{l-1}(1-h_{{r}}%
(X_{r}))\right\vert \mathcal{F}_{j}\right]  ,\quad j=0,\ldots,J-1,
\label{eq:ros-app}%
\end{equation}
where  empty products are equal to $1$ by definition, and the
supremum is taken over vector functions $\boldsymbol{h}=(h_{0},\ldots,
h_{J-1})\in\mathcal{H}^{J-j}.$ It is well known, that the optimal process
(Snell envelope) $(Y_{j}^{\star})$ can be attained by using indicator
functions $(h_{j})$ of the form $h_{j}(x)=1_{S_{j}^{\star}}(x)$ in
\eqref{eq:ros-app}, where the stopping regions $(S_{j}^{\star})$ have the
following characterization
\[
S_{j}^{\star}=\{x\in\mathbb{R}^{d}:G_{j}(x)\geq C_{j}(x)\},\quad
C_{j}(x)=\mathsf{E}[Y_{j+1}^{\star}|X_{j}=x],\quad j=0,\ldots,J-1,
\]
with $S_{J}^{\star}=\mathbb{R}^{d}$ by definition. A family of optimal
stopping times $(\tau_{j}^{\star})_{j=0,...,J}$ solving \eqref{eq:opt-st-j}
can then be defined as a family of first entry times
\begin{equation}
\tau_{j}^{\star}=\min\{j\leq l\leq J:X_{l}\in S_{l}^{\star}\},\quad
j=0,\ldots,J. \label{eq:tau-star}%
\end{equation}
Note that this definition implies that the family $(\tau_{j}^{\star}%
)_{j=0}^{J}$ is consistent.

\section{Monte Carlo optimization algorithms}

\label{sec:monte-carlo-optim}

We now propose two Monte Carlo optimization algorithms for estimating
$Y_{0}^{\star}$ in (\ref{eq:opt-st-j}). The first one (\textit{forward
approach}) is based on simultaneous optimization of a Monte Carlo estimate for
\eqref{eq:ros-app} over the exercise probability functions $h_{0},\ldots
,h_{J},$ whereas in the second approach these functions are estimated step by
step backwardly from $h_{J}$ down to $h_{0}$ based on (\ref{eq:p-recu}). The
latter procedure is referred to as the \textit{backward approach.}

\subsection{Forward approach}

\label{FA}

Let us consider the empirical counterpart of the optimization problem
\eqref{eq:ros-app} at time $j=0.$ To this end we generate a set of independent
trajectories of the chain $(X_{j}):$
\[
\mathcal{D}_{M}:=\left\{  (X_{0}^{(m)},\ldots,X_{J}^{(m)}),\quad
m=1,\ldots,M\right\}
\]
and consider the optimization problem
\begin{equation}
\sup_{\boldsymbol{h}\in\mathcal{H}^{J}}\left\{\frac{1}{M}\sum_{m=1}^{M}\left[
\sum_{l=0}^{J}G_{l}(X_{l}^{(m)})h_{{l}}(X_{l}^{(m)})\prod_{r=0}^{l-1}%
(1-h_{{r}}(X_{r}^{(m)}))\right] \right\} . \label{O1}%
\end{equation}
Let $\boldsymbol{h}_{M}$ be one of its solutions. Next we generate new $N$ independent
paths of the chain $(X_{j})_{j=0}^{J}$ and build an estimate for the optimal
value $Y_{0}^{\star}$ as
\begin{equation}
Y_{M,N}=\frac{1}{N}\sum_{n=1}^{N}\left[  \sum_{l=0}^{J}G_{l}(X_{l}%
^{(n)})h_{M,l}(X_{l}^{(n)})\prod_{r=0}^{l-1}(1-h_{M,r}(X_{r}^{(n)}))\right]
\label{RO1}%
\end{equation}
Note that the estimate $Y_{M,N}$ is low biased, that is, $\mathsf{E}%
[Y_{M,N}|\mathcal{D}_{M}]\leq Y_{0}^{\star}.$ The algorithms based on
(\ref{O1}) and (\ref{RO1}) are referred to as \textit{forward algorithms} in
contrast to the \textit{backward algorithms} described in the next section.

In Section~\ref{convf} we shall study the properties of the estimate $Y_{M,N}$
obtained by the forward approach. In particular we there show that $Y_{M,N}$
converges to $Y_{0}^{\star}$ as $N,M\rightarrow\infty,$ and moreover we derive
the corresponding convergence rates.

\subsection{Backward approach}

\label{back}

The forward approach in the previous sections can be rather costly especially
if $J$ is large, as it requires optimization over a product space
$\mathcal{H}^{J}.$ In this section we propose an alternative approximative
method which is based on a backward recursion. Fix again a class $\mathcal{H}$
of functions $h:$ $\mathbb{R}^{d}\rightarrow\lbrack0,1].$ We construct
estimates $\widehat{h}_{J},\ldots,\widehat{h}_{0}$ recursively using a set of
trajectories
\[
D_{M}:=\left\{  \left(  X_{0}^{(m)},X_{1}^{(m)},\ldots,X_{J}^{(m)}\right)
,\,m=1,\ldots,M\right\}  .
\]
We start with $\widehat{h}_{J}\equiv1.$ Suppose that $\widehat{h}_{k}%
,\ldots,\widehat{h}_{J}$ are already constructed, then define
\begin{equation}
\widehat{h}_{k-1}:=\underset{h\in\mathcal{H}}{\arg\sup}\,\widehat{Q}%
_{k-1}(h,\widehat{h}_{k}\ldots,\widehat{h}_{J}) \label{b1}%
\end{equation}
with
\begin{multline}
\widehat{Q}_{k-1}(h_{k-1},\ldots,h_{J}):=\\
=\frac{1}{M}\sum_{m=1}^{M}\left[  \sum_{j=k-1}^{J}G_{j}(X_{j}^{(m)}%
)h_{j}(X_{j}^{(m)})\prod_{l=k-1}^{j-1}(1-h_{l}(X_{l}^{(m)}))\right]
\label{b2}%
\end{multline}
in view of (\ref{eq:ros-app}).

\begin{remark}
Note that the optimal functions $h_{j}^{\star}(x)=1_{\{x\in S_{j}^{\star}\}},$
$j=0,\ldots,J-1,$ can be sequentially constructed via relations
\[
h_{k-1}^{\star}:=\underset{h\in\mathcal{H}}{\arg\sup}[Q_{k-1}(h,h_{k}^{\star
},\ldots,h_{J}^{\star})],\quad h_{J}^{\star}\equiv1,
\]
where
\begin{equation}
Q_{k-1}(h_{k-1},h_{k},\ldots,h_{J}):=\mathsf{E}\left[  \sum_{j=k-1}^{J}%
Z_{j}h_{j}(X_{j})\prod_{l=k-1}^{j-1}(1-h_{l}(X_{l}))\right]  , \label{qkexp}%
\end{equation}
(see also (\ref{randk}) and (\ref{rep})), provided that $h_{1}^{\star}%
,\ldots,h_{J}^{\star}\in\mathcal{H}.$ This fact was used in
\cite{becker2019solving} to construct approximations for $h_{j}^{\star},$
$j=0,\ldots, J,$ via neural networks. Although it might seem appealing to
consider classes of functions $\mathcal{H}:$ $\mathbb{R}^{d}\to\{0,1\},$ this
may lead to nonsmooth and nonconvex optimization problems. Here we present
general framework allowing us to balance between smoothness of the class
$\mathcal{H}$ and its ability to approximate $h_{j}^{\star},$ $j=0,\ldots, J,$
see Section~\ref{convb}.
\end{remark}

Working all the way back we thus end up with a sequence $\widehat{h}%
_{J},\ldots,\widehat{h}_{0}$ and, similar to (\ref{RO1}), may next obtain a
low-biased approximation $\widehat{Y}_{M,N}$ via an independent re-simulation
with sample size $N.$ By writing%
\begin{multline*}
\widehat{Q}_{k-1}(h_{k-1},\ldots,h_{J})=\\
\frac{1}{M}\sum_{m=1}^{M}\left(  G_{k-1}(X_{k-1}^{(m)})-\sum_{j=k}^{J}%
G_{j}(X_{j}^{(m)})h_{j}(X_{j}^{(m)})\prod_{l=k}^{j-1}(1-h_{l}(X_{l}%
^{(m)}))\right)  h_{k-1}(X_{k-1}^{(m)})\\
+\frac{1}{M}\sum_{m=1}^{M}\sum_{j=k}^{J}G_{j}(X_{j}^{(m)})h_{j}(X_{j}%
^{(m)})\prod_{l=k}^{j-1}(1-h_{l}(X_{l}^{(m)}))
\end{multline*}
we see that the backward step (\ref{b1})-(\ref{b2}) is equivalent to
\begin{align}
\widehat{h}_{k-1}  &  =\underset{h\in\mathcal{H}}{\arg\sup}\,\widehat{Q}%
_{k-1}(h,\widehat{h}_{k}\ldots,\widehat{h}_{J})\nonumber\\
&  =\underset{h\in\mathcal{H}}{\arg\sup}\sum_{m=1}^{M}h(X_{k-1}^{(m)}%
)\nonumber\\
&  \times\left(  G_{k-1}(X_{k-1}^{(m)})-\sum_{j=k}^{J}G_{j}(X_{j}%
^{(m)})\widehat{h}_{j}(X_{j}^{(m)})\prod_{l=k}^{j-1}(1-\widehat{h}_{l}%
(X_{l}^{(m)}))\right) \nonumber\\
&  =:\underset{h\in\mathcal{H}}{\arg\sup}\sum_{m=1}^{M}\xi_{k-1}%
^{(m)}h(X_{k-1}^{(m)}) \label{opt}%
\end{align}
Advantage of the backward algorithm is its computational efficiency: in each
step of the algorithm we need to perform optimization over a space
$\mathcal{H}$ and not over the product space $\mathcal{H}^{J}$ as in the
forward approach.

\section{Convergence analysis}

\label{conv} In this section we provide a convergence analysis of the
procedures introduced in Section~\ref{FA} and Section~\ref{back} respectively.

\subsection{Convergence analysis of the forward approach}

\label{convf}

We make the following assumptions.

\begin{description}
\item[(H)] Denote for any $\boldsymbol{h}_{1},\boldsymbol{h}_{2}\in
\mathcal{H}^{J},$
\begin{align*}
d_{X}(\boldsymbol{h}_{1},\boldsymbol{h}_{2}):=\sqrt{\mathsf{E}\left[  \left|
\sum_{j=0}^{J-1}\left|  h_{1,j}(X_{j})-h_{2,j}(X_{j})\right|  \prod
_{l=0}^{j-1}(1-h_{2,l}(X_{l}))\right|  ^{2}\right]  }%
\end{align*}
Assume that the class of functions $\mathcal{H} $ is such that
\begin{align}
\label{CA}\log[\mathcal{N}(\delta,\mathcal{H}^{J},d_{X})]\leq A\delta^{-\rho}%
\end{align}
for some constant $A>0 $, any $0<\delta<1 $ and some $\rho\in(0,2), $ where
$\mathcal{N}$ is the minimal number (covering number) of closed balls of
radius $\delta$ (w.r.t. $d_{X}$) needed to cover the class $\mathcal{H} $.

\item[(G)] Assume that all functions $G_{j}, $ $j=0,\ldots, J,$ are uniformly
bounded by a constant $C_{Z}.$

\item[(B)] Assume that the inequalities
\begin{align}
\label{BA} &  \mathsf{P}\bigl(|G_{j}(X_{j})-C_{j}(X_{j})]|\leq\delta\bigr)\leq
A_{0,j}\delta^{\alpha}, \quad\delta<\delta_{0}%
\end{align}
hold for some $\alpha>0 $, $A_{0,j}>0, $ $j=1,\ldots, J-1 $, and $\delta_{0}>0
$.
\end{description}

\begin{remark}
Note that
\begin{align*}
d_{X}(\boldsymbol{h}_{1},\boldsymbol{h}_{2})\leq\sum_{j=0}^{J-1} \|{h}%
_{1,j}-{h}_{2,j}\|_{L_{2}(P_{X_{j}})},
\end{align*}
where $P_{X_{i}}$ stands for the distribution of $X_{i}.$ Hence \eqref{CA}
holds if
\begin{equation}
\label{eq:entr-alter}\max_{j=0,\ldots,J-1}\log[\mathcal{N}(\delta
,\mathcal{H},L_{2}(P_{X_{j}}))]\leq\left(  \frac{A^{\prime}}{\delta}\right)
^{\rho}%
\end{equation}
for some constant $A^{\prime}>0.$
\end{remark}

\begin{theorem}
\label{thr:conv-forward} Assume that assumptions (\textbf{H}), (\textbf{G})
and (\textbf{B}) hold. Then for any $U>U_{0} $ and $M>M_{0} $ it holds with
probability at least $1-\delta,$
\begin{align}
\label{UBounds}0\leq Y^{\star}_{0}-\mathsf{E}[Y_{M,N}|\mathcal{D}_{M}]\leq
C\left(  \frac{\log^{2}(1/\delta)}{M}\right)  ^{\frac{1+\alpha}{2+\alpha
(1+\rho)}}%
\end{align}
with some constants $U_{0}>0 $, $M_{0}>0 $ and $C>0, $ provided that
\begin{align}
\label{eq:bias}0\leq Y_{0}^{\star}-\overline{Y}_{0}\leq DM^{-1/(1+\rho/2)},
\end{align}
for a constant $D>0,$ where $\overline{Y}_{0}$ is defined in \eqref{eq:ros-app}.
\end{theorem}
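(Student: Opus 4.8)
The plan is to separate the total error $Y_0^\star-\mathsf{E}[Y_{M,N}\mid\mathcal{D}_M]$ into an approximation (bias) part and a statistical (estimation) part, and to treat the latter as a \emph{fast-rate} empirical maximisation problem in which hypothesis (\textbf{B}) supplies a variance bound and hypothesis (\textbf{H}) controls the oscillations of the empirical objective. Write $\ell(\boldsymbol{h}):=\sum_{l=0}^{J}G_l(X_l)h_l(X_l)\prod_{r=0}^{l-1}(1-h_r(X_r))$, $Q(\boldsymbol{h}):=\mathsf{E}\,\ell(\boldsymbol{h})$, and let $Q_M$ be its empirical version over $\mathcal{D}_M$, so that $\boldsymbol{h}_M\in\arg\sup_{\boldsymbol{h}\in\mathcal{H}^J}Q_M(\boldsymbol{h})$ and, since the re-simulation in \eqref{RO1} is independent and unbiased, $\mathsf{E}[Y_{M,N}\mid\mathcal{D}_M]=Q(\boldsymbol{h}_M)$; the lower bound in \eqref{UBounds} is then the trivial suboptimality $Q(\boldsymbol{h}_M)\le Y_0^\star$. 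I would split $Y_0^\star-Q(\boldsymbol{h}_M)=(Y_0^\star-\overline{Y}_0)+(\overline{Y}_0-Q(\boldsymbol{h}_M))$; the first summand is nonnegative and, by the standing hypothesis \eqref{eq:bias}, bounded by $DM^{-1/(1+\rho/2)}$, an exponent that a short calculation shows to dominate the statistical rate, so the bias is asymptotically negligible.

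The key structural step is an excess-risk identity. Writing $V_j^{\boldsymbol{h}}$ for the value of the policy $\boldsymbol{h}$ started at time $j$ and using the one-step dynamic-programming relation $Y_j^\star=\max(G_j,C_j)$ together with $V_j^{\boldsymbol{h}}=h_j G_j+(1-h_j)\mathsf{E}[V_{j+1}^{\boldsymbol{h}}\mid\mathcal{F}_j]$, one telescopes
\[
Y_0^\star - Q(\boldsymbol{h}) = \sum_{j=0}^{J-1}\mathsf{E}\left[\,\bigl|h_j(X_j)-h_j^\star(X_j)\bigr|\,\bigl|G_j(X_j)-C_j(X_j)\bigr|\prod_{r=0}^{j-1}(1-h_r(X_r))\right],
\]
where $h_j^\star=1_{S_j^\star}$. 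This exhibits the excess risk as an $L_1$-type quantity weighted by the margin $|G_j-C_j|$, and is the place where the optimal-stopping structure enters.

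Next I would convert the margin into a variance bound. The multilinearity of $\ell$ together with (\textbf{G}) gives $\|\ell(\boldsymbol{h}_1)-\ell(\boldsymbol{h}_2)\|_{L_2(\widetilde{\mathsf{P}})}\lesssim d_X(\boldsymbol{h}_1,\boldsymbol{h}_2)$, which is exactly why the entropy in (\textbf{H}) is measured in $d_X$. Splitting each summand above on $\{|G_j-C_j|\le t\}$ versus its complement and inserting \eqref{BA} (then optimising in $t$) yields a Bernstein/Tsybakov-type inequality $\Var\bigl(\ell(\boldsymbol{h})-\ell(\boldsymbol{h}^\star)\bigr)\lesssim d_X(\boldsymbol{h},\boldsymbol{h}^\star)^2\lesssim \bigl(Y_0^\star-Q(\boldsymbol{h})\bigr)^{\alpha/(1+\alpha)}$. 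With this in hand I would run a peeling argument: slice $\mathcal{H}^J$ into shells of geometrically increasing excess risk, bound $\sup\bigl[\nu_M(\boldsymbol{h}^\star)-\nu_M(\boldsymbol{h})\bigr]$ on each shell by combining the shell variance with the entropy bound \eqref{CA} through a maximal inequality (Dudley/Talagrand type, from the empirical-process appendix), and feed the result into the basic inequality $Y_0^\star-Q(\boldsymbol{h}_M)\le(Y_0^\star-\overline{Y}_0)+\nu_M(\boldsymbol{h}^\star)-\nu_M(\boldsymbol{h}_M)$, where $\nu_M:=Q_M-Q$. A self-bounding (fixed-point) step then balances the $M^{-1/2}$-scale fluctuations against the margin exponent $\alpha/(1+\alpha)$ and the entropy exponent $\rho$, producing the exponent $\tfrac{1+\alpha}{2+\alpha(1+\rho)}$; the high-probability (Bernstein tail) version introduces the $\log(1/\delta)$ factors, squared because they enter both the variance term and the fixed-point balance, giving the $\log^2(1/\delta)$.

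The main obstacle is this localised empirical process estimate: obtaining the \emph{fast} rate requires carefully trading the margin-driven variance against the $d_X$-entropy inside the peeling, rather than settling for a crude uniform deviation bound. A secondary technical nuisance is that $\boldsymbol{h}^\star\notin\mathcal{H}^J$ in general, so the variance inequality must be applied relative to a best-in-class maximiser $\overline{\boldsymbol{h}}$ of $Q$; the resulting cross terms are then controlled by the same bias $Y_0^\star-\overline{Y}_0$ already assumed small in \eqref{eq:bias}.
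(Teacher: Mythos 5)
Your proposal is correct and is essentially the paper's own argument: the same bias/estimation decomposition with the basic inequality for the empirical maximiser, the same excess-risk identity (the appendix corollary underlying Proposition~\ref{prop:DeltaX-Delta}), the same conversion of the margin assumption (\textbf{B}) into a bound of the form $\Delta_X(\boldsymbol{h}^\star,\boldsymbol{h})\lesssim\sqrt{J}\,\Delta(\boldsymbol{h})^{\alpha/(2(1+\alpha))}$ obtained by splitting on the margin event and optimising the threshold, and the same self-bounding/fixed-point step that produces the stated exponent. The only difference is implementational: you peel over shells of excess risk and assemble the maximal inequalities yourself, whereas the paper splits into just two regions $\Delta_X(\boldsymbol{h}^\star,\boldsymbol{h})\le\varepsilon_M$ and $\Delta_X(\boldsymbol{h}^\star,\boldsymbol{h})>\varepsilon_M$ with $\varepsilon_M=M^{-1/(2+\rho)}$ and quotes van de Geer's localized and weighted empirical-process inequalities (which are themselves proved by peeling) to control the two suprema.
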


\begin{proof}
Denote
\begin{align*}
\mathcal{Q}(\boldsymbol{h}):=\mathsf{E}\left[  \sum_{j=0}^{J}Z_{j}
p_{j}(\boldsymbol{h}) \right]  , \quad\Delta(\boldsymbol{h}):=\mathcal{Q}%
(\boldsymbol{h}^{\star})-\mathcal{Q}(\boldsymbol{h})
\end{align*}
with
\begin{align*}
p_{j}(\boldsymbol{h}):=h_{j}(X_{j})\prod_{l=0}^{j-1}(1-h_{l}(X_{l})),\quad
j=0,\ldots,J.
\end{align*}
Define also $\Delta_{M}(\boldsymbol{h}):=\sqrt{M}(\mathcal{Q}_{M}%
(\boldsymbol{h})-\mathcal{Q}(\boldsymbol{h}))$ with
\begin{align*}
\mathcal{Q}_{M}(\boldsymbol{h}):=\frac{1}{M}\sum_{m=1}^{M}\left[  \sum
_{j=0}^{J}Z_{j}^{(m)}h_{j}(X_{j}^{(m)})\prod_{l=0}^{j-1}(1-h_{l}(X_{l}%
^{(m)}))\right]
\end{align*}
and put $\Delta_{M}(\boldsymbol{h}^{\prime},\boldsymbol{h}):=\Delta
_{M}(\boldsymbol{h}^{\prime})-\Delta_{M}(\boldsymbol{h}). $ Let $\overline
{\boldsymbol{h}}$ be one of the solutions of the optimization problem
$\sup_{\boldsymbol{h}\in\mathcal{H}^{J}}\mathcal{Q}(\boldsymbol{h}).$ Since
$\mathcal{Q}_{M}(\boldsymbol{h}_{M})\geq\mathcal{Q}_{M}(\overline
{\boldsymbol{h}}),$ we obviously have
\begin{align}
\label{PUB1}\Delta(\boldsymbol{h}_{M})  &  \leq\Delta(\overline{\boldsymbol{h}%
})+\frac{\left[  \Delta_{M}({\boldsymbol{h}}^{\star},\overline{\boldsymbol{h}%
}) +\Delta_{M}(\boldsymbol{h}_{M},{\boldsymbol{h}}^{\star}) \right]  }%
{\sqrt{M}}.
\end{align}
Set $\varepsilon_{M}=M^{-1/(2+\rho)} $ and derive
\begin{multline}
\label{BINEQ}\Delta(\boldsymbol{h}_{M})\leq\Delta(\overline{\boldsymbol{h}})+
\frac{2}{\sqrt{M}}\sup_{\boldsymbol{h}\in\mathcal{H}^{J}:\, \Delta
_{X}(\boldsymbol{h}^{\star},\boldsymbol{h})\leq\varepsilon_{M} } |\Delta
_{M}(\boldsymbol{h}^{\star},\boldsymbol{h})|\\
+2\frac{\Delta_{X}^{(1-\rho/2)}(\boldsymbol{h}^{\star},\boldsymbol{h}_{M}%
)}{\sqrt{M}}\, \sup_{\boldsymbol{h}\in\mathcal{H}^{J}:\, \Delta_{X}%
(\boldsymbol{h}^{\star},\boldsymbol{h})>\varepsilon_{M} } \left[
\frac{|\Delta_{M}(\boldsymbol{h}^{\star},\boldsymbol{h})|} {\Delta
_{X}^{(1-\rho/2)}(\boldsymbol{h}^{\star},\boldsymbol{h})} \right]  ,
\end{multline}
where
\begin{align*}
\Delta_{X}(\boldsymbol{h}_{1},\boldsymbol{h}_{2}):=\sqrt{\mathsf{E}\left[
\left|  \sum_{j=0}^{J}Z_{j}p_{j}(\boldsymbol{h}_{1})-\sum_{j=0}^{J}Z_{j}%
p_{j}(\boldsymbol{h}_{2})\right|  ^{2}\right]  }, \quad\boldsymbol{h}_{1},
\boldsymbol{h}_{2} \in\mathcal{H}^{J}.
\end{align*}
The reason behind splitting the r. h. s. of \eqref{PUB1} into two parts is
that the behavior of the empirical process $\Delta_{M}(\boldsymbol{h}%
^{*},\boldsymbol{h}) $ is different on the sets $\{\boldsymbol{h}%
\in\mathcal{H}^{J}:\, \Delta_{X}(\boldsymbol{h}^{*},\boldsymbol{h}%
)>\varepsilon_{M} \} $ and $\{\boldsymbol{h}\in\mathcal{H}^{J}:\, \Delta
_{X}(\boldsymbol{h}^{*},\boldsymbol{h})\leq\varepsilon_{M} \}. $ The following
lemma holds.

\begin{lemma}
\label{lem:Delta-d} It holds
\begin{align*}
\Delta_{X}(\boldsymbol{h}_{1},\boldsymbol{h}_{2})\leq C_{Z}d_{X}%
(\boldsymbol{h}_{1},\boldsymbol{h}_{2})
\end{align*}
for any $\boldsymbol{h}_{1},\boldsymbol{h}_{2}\in\mathcal{H}^{J}.$
\end{lemma}

Lemma~\ref{lem:Delta-d} immediately implies that
\begin{align}
\label{ENT_DG}\log\bigl(\mathcal{N}(\delta,\mathcal{H}^{J},\Delta
_{X})\bigr) \leq J\log\bigl(\mathcal{N}(\delta,\mathcal{H},d_{X})\bigr)
\end{align}
where $\mathcal{N}(\delta,\mathcal{S},d) $ is the covering number of a class
$\mathcal{S} $ w.r.t. the pseudo-distance $d.$ Hence due to the assumption
(\textbf{H}) we derive from \cite{van2000applications} that for any
$\boldsymbol{h}\in\mathcal{H}^{J} $ and any $U>U_{0}, $
\begin{align}
\label{INEQ1}\mathsf{P}\left(  \sup_{\boldsymbol{h}^{\prime}\in\mathcal{H}%
^{J},\,\Delta_{X}(\boldsymbol{h},\boldsymbol{h}^{\prime})\leq\varepsilon_{M}
}|\Delta_{M}(\boldsymbol{h},\boldsymbol{h}^{\prime})|>U\varepsilon_{M}
^{1-\rho/2} \right)  \leq C\exp\left(  -\frac{U}{\varepsilon_{M} ^{\rho}C^{2}}
\right)
\end{align}
and
\begin{align}
\label{INEQ2}\mathsf{P}\left(  \sup_{\boldsymbol{h}^{\prime}\in\mathcal{H}%
^{J},\,\Delta_{X}(\boldsymbol{h},\boldsymbol{h}^{\prime})>\varepsilon_{M}
}\frac{|\Delta_{M}(\boldsymbol{h},\boldsymbol{h}^{\prime})|}{\Delta^{1-\rho/2
}_{X}(\boldsymbol{h},\boldsymbol{h}^{\prime})}>U \right)  \leq C \exp(-U/C^{2}
),
\end{align}
\begin{align}
\label{INEQ3}\mathsf{P}\left(  \sup_{\boldsymbol{h}^{\prime}\in\mathcal{H}%
^{J}}|\Delta_{M}(\boldsymbol{h},\boldsymbol{h}^{\prime})|>z\sqrt{M} \right)
\leq C \exp(-Mz^{2}/C^{2}B )
\end{align}
with some constants $C>0, \, B>0 $ and $U_{0}>0.$ To simplify notations
denote
\begin{align*}
\mathcal{W}_{1,M}:=\sup_{\boldsymbol{h}\in\mathcal{H}^{J}:\, \Delta
_{X}(\boldsymbol{h}^{*},\boldsymbol{h})\leq\varepsilon_{M} } |\Delta
_{M}(\boldsymbol{h}^{*},\boldsymbol{h})|,\\
\mathcal{W}_{2,M}:=\sup_{\boldsymbol{h}\in\mathcal{H}^{J}:\, \Delta
_{X}(\boldsymbol{h}^{*},\boldsymbol{h})>\varepsilon_{M} } \frac{|\Delta
_{M}(\boldsymbol{h}^{*},\boldsymbol{h})|} {\Delta_{X}^{(1-\rho/2)}%
(\boldsymbol{h}^{*},\boldsymbol{h})}%
\end{align*}
and set $\mathcal{A}_{0}:=\{ \mathcal{W}_{1,M}\leq U\varepsilon_{M}
^{1-\rho/2} \} $ for some $U>U_{0}. $ Then the inequality \eqref{INEQ1} leads
to the estimate
\[
\mathsf{P}(\bar{\mathcal{A}}_{0})\leq C\exp(-U\varepsilon_{M} ^{-\rho}/C^{2}
).
\]
Furthermore, since $\Delta(\overline{\boldsymbol{h}})\leq D M^{-1/(1+\rho/2 )}
$ and $\varepsilon^{1-\rho/2}_{M}/\sqrt{M}=M^{-1/(1+\rho/2)} $, we get on
$\mathcal{A}_{0} $
\begin{align}
\label{BINEQ1}\Delta(\boldsymbol{h}_{M})  &  \leq C_{0}M^{-1/(1+\rho/2
)}+2\frac{\Delta_{X}^{(1-\rho/2)}(\boldsymbol{h}^{*},\boldsymbol{h}_{M}%
)}{\sqrt{M}}\,\mathcal{W}_{2,M}%
\end{align}
with $C_{0}=D+2U $. Now we need to find a bound for $\Delta_{X}(\boldsymbol{h}%
^{*},\boldsymbol{h}_{M}) $ in terms of $\Delta(\boldsymbol{h}_{M}). $ This is
exactly the place, where the condition \eqref{BA} is used. The following
proposition holds.

\begin{proposition}
\label{prop:DeltaX-Delta} Suppose that the assumption (\textbf{BA}) holds for
$\delta>0 $, then there exists a constant $c_{\alpha}>0 $ not depending on $J$
such that for all $\boldsymbol{h} \in\mathcal{H}^{J}$ it holds
\begin{align*}
\Delta_{X}(\boldsymbol{h}^{\star},\boldsymbol{h})\leq c_{\alpha}\sqrt{J}%
\Delta^{\alpha/(2(1+\alpha))}(\boldsymbol{h}),
\end{align*}
where $c_{\alpha}$ depends on $\alpha$ only.
\end{proposition}

Let us introduce a set
\begin{align*}
\mathcal{A}_{1}:=\left\{  \Delta(\boldsymbol{h}_{M})>C_{0}(1-\varkappa
)^{-1}M^{-1/(1+\rho/2)} \right\}
\end{align*}
for some $0<\varkappa<1. $ Thus we get on $\mathcal{A}_{0}\cap\mathcal{A}_{1}
$
\begin{align*}
\Delta(\boldsymbol{h}_{M})\leq C_{1}\frac{ \Delta^{\alpha(1-\rho
/2)/(2(1+\alpha))}(\boldsymbol{h}_{M})}{\varkappa\sqrt{M}}\,\mathcal{W}_{2,M},
\end{align*}
where the constant $C_{1} $ depends on $\alpha$ but not on $\rho. $ Therefore
\begin{align*}
\Delta(\boldsymbol{h}_{M})\leq(\varkappa/C_{1})^{-\nu} M^{-\nu/2}%
\mathcal{W}^{\nu}_{2,M}%
\end{align*}
with $\nu:=\frac{2(1+\alpha) }{2+\alpha(1+\rho/2 )}. $ Applying inequality
\eqref{INEQ2} to $\mathcal{W}^{\nu}_{2,M} $ and using the fact that $\nu
/2\leq1/(1+\rho/2) $ for all $0<\rho\leq2, $ we finally obtain the desired
bound for $\Delta(\boldsymbol{h}_{M}), $
\begin{multline*}
\mathsf{P}\left(  \left\{  \Delta(\boldsymbol{h}_{M})>(V/M)^{\nu/2}\right\}
\cap\mathcal{A}_{1} \right)  \leq\\
\mathsf{P}\left(  \left\{  \Delta(\boldsymbol{h}_{M})>(V/M)^{\nu/2} \right\}
\cap\mathcal{A}_{0}\cap\mathcal{A}_{1}\right)  + \mathsf{P}(\bar
{\mathcal{A}_{0}})\\
\leq C\exp(-\sqrt{V}/B)+C\exp\left(  -UM^{\rho/(2+\rho)}/C^{2} \right)
\end{multline*}
which holds for all $V>V_{0} $ and $M>M_{0} $ with some constant $B $
depending on $\rho$ and $\alpha. $
\end{proof}

\subsection{Convergence analysis of the backward approach}

In this section we study the properties of the backward algorithm and prove
its convergence. The following result holds. \label{convb}

\begin{proposition}
\label{errstep} For any $k>1$ and any $h_{k-1},\ldots,h_{J}\in\mathcal{H},$
one has that
\begin{multline*}
0\leq Q_{k-1}(h_{k-1}^{\star},\ldots,h_{J}^{\star})-Q_{k-1}(h_{k-1}%
,\ldots,h_{J})\leq Q_{k}(h_{k}^{\star},\ldots,h_{J}^{\star})-Q_{k}%
(h_{k},\ldots,h_{J})\\
+\mathsf{E}\left[  \left(  Z_{k-1}-C_{k-1}^{\star}\right)  \left(
h_{k-1}^{\star}(X_{k-1})-h_{k-1}(X_{k-1})\right)  \right]  .
\end{multline*}
Note that $Z_{k-1}-C_{k-1}^{\star}\geq0$ if $h_{k-1}^{\star}(X_{k-1})=1$ and
$Z_{k-1}-C_{k-1}^{\star}<0$ if $h_{k-1}^{\star}(X_{k-1})=0$ due to the dynamic
programming principle.
\end{proposition}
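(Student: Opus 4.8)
The plan is to peel off the $j=k-1$ summand in the definition \eqref{qkexp} of $Q_{k-1}$ and reduce the whole estimate to a one–step comparison governed by the Markov/dynamic–programming structure. Write $a:=h_{k-1}(X_{k-1})$, $a^{\star}:=h_{k-1}^{\star}(X_{k-1})$, abbreviate $Q_{k-1}^{\star}:=Q_{k-1}(h_{k-1}^{\star},\dots,h_{J}^{\star})$ and $Q_{k}^{\star}:=Q_{k}(h_{k}^{\star},\dots,h_{J}^{\star})$, and introduce the tail rewards
\[
R_{k}:=\sum_{j=k}^{J}Z_{j}h_{j}(X_{j})\prod_{l=k}^{j-1}(1-h_{l}(X_{l})),\qquad R_{k}^{\star}:=\sum_{j=k}^{J}Z_{j}h_{j}^{\star}(X_{j})\prod_{l=k}^{j-1}(1-h_{l}^{\star}(X_{l})).
\]
By the empty–product convention the $j=k-1$ term factors out $(1-a)$ from the rest, giving $Q_{k-1}=\mathsf{E}[Z_{k-1}a+(1-a)R_{k}]$, and likewise $Q_{k-1}^{\star}=\mathsf{E}[Z_{k-1}a^{\star}+(1-a^{\star})R_{k}^{\star}]$, while $Q_{k}=\mathsf{E}[R_{k}]$ and $Q_{k}^{\star}=\mathsf{E}[R_{k}^{\star}]$. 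Using the algebraic identity $(1-a^{\star})R_{k}^{\star}-(1-a)R_{k}=(a-a^{\star})R_{k}^{\star}+(1-a)(R_{k}^{\star}-R_{k})$, subtraction yields
\[
Q_{k-1}^{\star}-Q_{k-1}=\mathsf{E}\!\left[(a^{\star}-a)(Z_{k-1}-R_{k}^{\star})\right]+\mathsf{E}\!\left[(1-a)(R_{k}^{\star}-R_{k})\right].
\]

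Next I would treat the two terms separately. For the first, $a^{\star}-a$ and $Z_{k-1}=G_{k-1}(X_{k-1})$ are $\mathcal{F}_{k-1}$–measurable, so conditioning on $\mathcal{F}_{k-1}$ replaces $R_{k}^{\star}$ by $\mathsf{E}[R_{k}^{\star}\mid\mathcal{F}_{k-1}]$. The key observation is that, since the $h_{j}^{\star}=1_{S_{j}^{\star}}$ are indicators, $R_{k}^{\star}=Z_{\tau_{k}^{\star}}$ for the first–entry time \eqref{eq:tau-star}, so $\mathsf{E}[R_{k}^{\star}\mid\mathcal{F}_{k}]=Y_{k}^{\star}$ and, by the Markov property, $\mathsf{E}[R_{k}^{\star}\mid\mathcal{F}_{k-1}]=\mathsf{E}[Y_{k}^{\star}\mid X_{k-1}]=C_{k-1}^{\star}$. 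Hence the first term equals exactly $\mathsf{E}[(h_{k-1}^{\star}(X_{k-1})-h_{k-1}(X_{k-1}))(Z_{k-1}-C_{k-1}^{\star})]$, the additive correction in the claim.

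For the second term I would exploit that $0\le 1-a\le 1$ together with $\mathsf{E}[R_{k}^{\star}-R_{k}\mid\mathcal{F}_{k-1}]\ge 0$. The latter holds because $\mathsf{E}[R_{k}\mid\mathcal{F}_{k}]$ is the value of the randomized policy $(h_{k},\dots,h_{J})$ started at $k$, hence $\le Y_{k}^{\star}=\mathsf{E}[R_{k}^{\star}\mid\mathcal{F}_{k}]$, and conditioning further on $\mathcal{F}_{k-1}$ preserves nonnegativity. Since $1-a$ is $\mathcal{F}_{k-1}$–measurable,
\[
\mathsf{E}[(1-a)(R_{k}^{\star}-R_{k})]=\mathsf{E}\!\left[(1-a)\,\mathsf{E}[R_{k}^{\star}-R_{k}\mid\mathcal{F}_{k-1}]\right]\le \mathsf{E}[R_{k}^{\star}-R_{k}]=Q_{k}^{\star}-Q_{k},
\]
which gives the asserted upper bound. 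The lower bound $Q_{k-1}^{\star}-Q_{k-1}\ge 0$ is immediate from optimality: $Q_{k-1}(h_{k-1},\dots,h_{J})=\mathsf{E}[Z_{\tau_{k-1}}]\le\mathsf{E}[Y_{k-1}^{\star}]$ for the randomized stopping time built from $(h_{k-1},\dots,h_{J})$, with equality for the optimal indicators, so $(h_{k-1}^{\star},\dots,h_{J}^{\star})$ maximizes $Q_{k-1}$.

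The hard part is the second term. Bounding $1-h_{k-1}(X_{k-1})$ by $1$ is \emph{not} legitimate path by path, because $R_{k}^{\star}-R_{k}$ need not be nonnegative pointwise; the whole point is to pass to the conditional expectation given $\mathcal{F}_{k-1}$, where nonnegativity is restored by the Snell–envelope optimality $\mathsf{E}[R_{k}\mid\mathcal{F}_{k}]\le Y_{k}^{\star}$. The second delicate point is the identification $\mathsf{E}[R_{k}^{\star}\mid\mathcal{F}_{k-1}]=C_{k-1}^{\star}$, which rests on the first–entry representation $R_{k}^{\star}=Z_{\tau_{k}^{\star}}$ and the Markov property; once these two facts are in place, the remaining manipulations are purely algebraic.
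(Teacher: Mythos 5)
Your proof is correct and takes essentially the same route as the paper's: both peel off the $j=k-1$ term in the definition of $Q_{k-1}$, identify $\mathsf{E}\left[\left.\sum_{j=k}^{J}Z_{j}h_{j}^{\star}(X_{j})\prod_{l=k}^{j-1}(1-h_{l}^{\star}(X_{l}))\right\vert\mathcal{F}_{k-1}\right]=C_{k-1}^{\star}$ via the first-entry representation and the Markov property, and then discard a remainder term using the suboptimality $\mathsf{E}[R_{k}\mid\mathcal{F}_{k-1}]\leq C_{k-1}^{\star}$ together with $0\leq h_{k-1}\leq1$. Your step bounding $\mathsf{E}\left[(1-a)\,\mathsf{E}[R_{k}^{\star}-R_{k}\mid\mathcal{F}_{k-1}]\right]$ by $Q_{k}^{\star}-Q_{k}$ is, after rearrangement, literally the same inequality as the paper's dropping of the nonpositive term $\mathsf{E}\left[h_{k-1}(X_{k-1})\left(\mathsf{E}[R_{k}\mid\mathcal{F}_{k-1}]-C_{k-1}^{\star}\right)\right]$.
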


This implies the desired convergence.

\begin{theorem}
Assume (\textbf{G}) and suppose that
\begin{equation}
\label{eq:entr-alt}\max_{j=0,\ldots,J-1} \mathcal{N}(\delta,\mathcal{H}%
,L_{2}(P_{X_{j}}))\leq\left(  \frac{\mathcal{A}}{\delta}\right)  ^{V}%
\end{equation}
holds for some $V>0$ and $\mathcal{A}>0.$ Then with probability at least
$1-\delta,$ and $k=1,\ldots,J,$
\begin{multline}
\label{eq:back-bound}0\leq Q_{k-1}(h_{k-1}^{\star},\ldots,h_{J}^{\star
})-Q_{k-1}(\widehat{h}_{k-1},\ldots,\widehat{h}_{J})\lesssim J\sqrt{\frac{V
\log(J\mathcal{A})}{M}}+\\
+J\sqrt{\frac{\log(1/\delta)}{M}}+\sum_{l=k-1}^{J-1}\inf_{h\in\mathcal{H}%
}\mathsf{E}\left[  \left(  Z_{l}-C_{l}^{\star}\right)  \left(  h_{l}^{\star
}(X_{l})-h(X_{l})\right)  \right]  ,
\end{multline}

\end{theorem}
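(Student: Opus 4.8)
The plan is to reduce the global error to a sum of one--step errors by an exact telescoping (performance--difference) argument, and then to control each one--step error by an empirical--process estimate, crucially exploiting the monotonicity of continuation values to stop the step errors from propagating geometrically.

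First I would iterate the exact one--step identity underlying Proposition~\ref{errstep}. Writing $C_l^{\boldsymbol u}(x):=\mathsf{E}[\sum_{j=l+1}^{J}Z_j u_j(X_j)\prod_{r=l+1}^{j-1}(1-u_r(X_r))\mid X_l=x]$ for the continuation value under a future policy $\boldsymbol u=(u_{l+1},\dots,u_J)$, conditioning on $\mathcal{F}_l$ in \eqref{qkexp} gives $Q_l(h_l,\dots,h_J)=Q_{l+1}(h_{l+1},\dots,h_J)+\mathsf{E}[h_l(X_l)(Z_l-C_l^{(h_{l+1},\dots,h_J)}(X_l))]$. Applying this at $\boldsymbol h^\star$ and at $\widehat{\boldsymbol h}$, subtracting, and using $\widehat h_J\equiv h_J^\star\equiv1$ (so the error vanishes at $l=J$), I obtain the exact telescoping $\mathcal{E}_{k-1}:=Q_{k-1}(\boldsymbol h^\star)-Q_{k-1}(\widehat{\boldsymbol h})=\sum_{l=k-1}^{J-1}\widetilde T_l$, where $\widetilde T_l:=\mathsf{E}[(Z_l-C_l^\star)h_l^\star(X_l)]-\mathsf{E}[(Z_l-C_l^{\widehat{\boldsymbol h}})\widehat h_l(X_l)]$. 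The left inequality $\mathcal{E}_{k-1}\ge0$ is optimality of the Snell envelope.

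The crucial step is the bound on $\widetilde T_l$. For an arbitrary $h\in\mathcal{H}$ one checks, by a direct cancellation of all terms containing $h$, that
\[
\widetilde T_l=\mathsf{E}\bigl[(Z_l-C_l^\star)(h_l^\star-h)(X_l)\bigr]+\mathsf{E}\bigl[(C_l^{\widehat{\boldsymbol h}}-C_l^\star)(X_l)\,h(X_l)\bigr]+\mathsf{E}\bigl[(Z_l-C_l^{\widehat{\boldsymbol h}})(h-\widehat h_l)(X_l)\bigr].
\]
The middle term is $\le0$ because the optimal continuation value dominates any sub-optimal one, $C_l^{\widehat{\boldsymbol h}}\le C_l^\star$, while $h\ge0$; this sign is exactly what prevents the future error $\mathcal{E}_{l+1}$ from re-entering the one-step estimate and causing an exponential blow-up in the number of steps. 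The last term equals $\Phi_l(h,\widehat{\boldsymbol h}_{>l})-\Phi_l(\widehat h_l,\widehat{\boldsymbol h}_{>l})$ with $\Phi_l(h,\boldsymbol u):=\mathsf{E}[\xi_l(\boldsymbol u)h(X_l)]$ and $\xi_l$ the pseudo-response of \eqref{opt}; since $\widehat h_l$ maximises the empirical version $\widehat\Phi_l(\cdot,\widehat{\boldsymbol h}_{>l})$, inserting $\pm\widehat\Phi_l$ bounds it by $2\sup_{h'\in\mathcal{H},\,\boldsymbol u\in\mathcal{H}^{J-l}}|\widehat\Phi_l(h',\boldsymbol u)-\Phi_l(h',\boldsymbol u)|=:2\mathcal{W}_l$. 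Taking the infimum over $h$ gives $\widetilde T_l\le\inf_{h\in\mathcal{H}}\mathsf{E}[(Z_l-C_l^\star)(h_l^\star-h)(X_l)]+2\mathcal{W}_l$, whose first term is precisely the approximation term in \eqref{eq:back-bound}.

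It then remains to control $\mathcal{W}_l$. By (\textbf{G}) the functions $\xi_l(\boldsymbol u)h'$ are uniformly bounded by $2C_Z$, and since $\widehat{\boldsymbol h}_{>l}$ is built from the same sample $D_M$ I must take the supremum over all future policies $\boldsymbol u\in\mathcal{H}^{J-l}$, so the relevant index class is $\mathcal{H}^{J-l+1}$. Using $|\xi_l(\boldsymbol u)-\xi_l(\boldsymbol u')|\le C_Z\sum_{j>l}|u_j(X_j)-u_j'(X_j)|$ together with \eqref{eq:entr-alt}, its $L_2$-covering numbers satisfy $\log\mathcal{N}(\delta,\mathcal{H}^{J-l+1},L_2)\lesssim (J-l+1)V\log(J\mathcal{A}/\delta)$; a symmetrisation/Dudley maximal inequality then bounds $\mathsf{E}\,\mathcal{W}_l\lesssim\sqrt{V\log(J\mathcal{A})/M}$ (up to the step factor), while a bounded-differences (McDiarmid) concentration controls the fluctuation of $\mathcal{W}_l$ about its mean by $\sqrt{\log(1/\delta)/M}$. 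A union bound over the steps $l=k-1,\dots,J-1$ and summation of the resulting $O(J)$ terms produces the two stochastic terms $J\sqrt{V\log(J\mathcal{A})/M}$ and $J\sqrt{\log(1/\delta)/M}$. I expect the main obstacle to be exactly this uniform control: because the backward recursion reuses a single data set, the pseudo-responses $\xi_l$ depend on the data-dependent estimates $\widehat h_{l+1},\dots,\widehat h_J$, which forces the supremum over the whole product class $\mathcal{H}^{J-l}$ and a careful accounting of the $J$- and entropy-dependence; the accompanying conceptual point is to set up the telescoping so that the future policy enters only through its own continuation value $C_l^{\widehat{\boldsymbol h}}$, making the monotonicity $C_l^{\widehat{\boldsymbol h}}\le C_l^\star$ available to discard the cross term and keep the accumulation linear in $J$.
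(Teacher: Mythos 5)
Your proposal is correct and is essentially the paper's own argument in different packaging: your exact telescoping plus the three-term decomposition of $\widetilde T_l$, with the monotonicity $C_l^{\widehat{\boldsymbol h}}\leq C_l^{\star}$ discarding the cross term, is precisely the content of Proposition~\ref{errstep} combined with the insertion of $\pm\widehat Q_{k-1}$ and the empirical optimality of $\widehat h_{k-1}$ used in the paper's proof, and your uniform control over the product class $\mathcal{H}^{J-l+1}$ (forced by the reuse of the sample $D_M$) matches the paper's covering-number bound \eqref{eq:entr-g}. The only cosmetic differences are that you work with the linear functional $\Phi_l$ built from the pseudo-responses of \eqref{opt} rather than with $Q_{k-1}$ itself, and you invoke symmetrisation/Dudley plus McDiarmid where the paper uses Talagrand's inequality; both yield the same rates.
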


where $\lesssim$ stands for inequality up to a constant depending on $C_{Z}$.

\begin{remark}
\label{rem:appr} A simple inequality
\begin{align*}
\sum_{l=k-1}^{J-1}\inf_{h\in\mathcal{H}}\mathsf{E}\left[  \left(  Z_{l}%
-C_{l}^{\star}\right)  \left(  h_{l}^{\star}(X_{l})-h(X_{l})\right)  \right]
\leq C_{Z}\sum_{l=k-1}^{J-1} \inf_{h\in\mathcal{H}}\|{h}_{l}-{h}^{\star}%
_{l}\|_{L_{2}(P_{X_{l}})}%
\end{align*}
shows that we can chose class $\mathcal{H}$ to minimise $\inf_{h\in
\mathcal{H}}\|{h}-{h}^{\star}_{j}\|_{L_{2}(P_{X_{j}})}.$ Consider classes
$\mathcal{H}$ of the form:
\begin{align*}
\mathcal{H}_{n,r}(R):=\left\{  \sum\limits_{i = 1}^{n} a_{i}\psi(A_{i} x +
b),\, A_{i}\in\mathbb{R}^{r\times d},\, a_{i}\in\mathbb{R},\, b\in
\mathbb{R}^{r},\, \sum_{i=1}^{n}|a_{i}|\leq R \right\}
\end{align*}
where $\psi: \mathbb{R}^{r} \to\mathbb{R}$ is infinitely many times
differentiable in some open sphere in $\mathbb{R}^{r}$ and $r\leq d$. Then
according to Corollary~\ref{cor:ind-appr},
\begin{align*}
\inf_{h\in\mathcal{H}_{n,r}(R)}\|{h}-{h}^{\star}_{j}\|_{L_{2}(P_{X_{j}})}\leq
C(d,\delta)\, n^{-\frac{1}{2d}+\delta}%
\end{align*}
for arbitrary small $\delta>0$ and large enough $R>0,$ provided that each
measure $P_{X_{j}}$ is regular in the sense that
\begin{align}
\label{eq:reg-ps}P_{X_{j}}\bigl (S^{\star}_{j}\setminus(S^{\star}%
_{j})^{\varepsilon}\bigr)\leq a_{d} \,\varepsilon,
\end{align}
where for any set $A$ in $\mathbb{R}^{d}$ we denote by $A^{\varepsilon}$ the
set $A^{\varepsilon}:=\{x\in\mathbb{R}^{d}:\, \mathrm{dist}(x,A)\leq
\varepsilon\}.$ Moreover, it is not difficult to see that \eqref{eq:entr-alt}
holds for $\mathcal{H}_{n,r}$ with $V$ proportional to $n$ and {$\mathcal{A}$ proportional to $R,$ see Lemma~16.6 in \cite{gyorfi2002distribution}}.
\end{remark}

\begin{corollary}\label{Neur}
Remark~\ref{rem:appr} implies that the second term in \eqref{eq:back-bound}
converges to $0$ at the rate $O(n^{-1/(2d)+\delta})$ as $n\to\infty,$ where
$n$ is the number of neurons in the approximating neural network. On the other
hand, the constant $\mathcal{A}$ in \eqref{eq:entr-alt} is proportional to
$n,$ so that we get
\begin{align*}
Q_{k-1}(h_{k-1}^{\star},\ldots,h_{J}^{\star})-Q_{k-1}(\widehat{h}_{k-1}%
,\ldots,\widehat{h}_{J})\lesssim J\sqrt{\frac{n J\log(\mathcal{A})}{M}}+\\
+J\sqrt{\frac{\log(1/\delta)}{M}}+\frac{J}{n^{1/(2d)}}.
\end{align*}
By balancing two errors we arrive at the bound
\begin{align*}
0\leq Q_{k-1}(h_{k-1}^{\star},\ldots,h_{J}^{\star})-Q_{k-1}(\widehat{h}%
_{k-1},\ldots,\widehat{h}_{J})\lesssim J\left(  \sqrt{\frac{J}{M^{1+1/d}}}
+\sqrt{\frac{\log(1/\delta)}{M}}\right)
\end{align*}
with probability $1-\delta.$ In fact, this gives the overall error bounds for
the case of one layer neural networks based approximations.
\end{corollary}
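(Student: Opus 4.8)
The plan is to read Corollary~\ref{Neur} as a pure bias--variance (approximation versus complexity) tradeoff obtained by specializing the backward-approach bound \eqref{eq:back-bound} to the neural-network classes $\mathcal{H}_{n,r}(R)$ of Remark~\ref{rem:appr} and then optimizing over the number of neurons $n$. First I would take \eqref{eq:back-bound} as the master inequality and insert the two inputs recorded in Remark~\ref{rem:appr}. The approximation (bias) input: by the simple inequality in Remark~\ref{rem:appr} the last sum in \eqref{eq:back-bound} is controlled by $C_Z\sum_{l=k-1}^{J-1}\inf_{h\in\mathcal{H}}\|h-h^{\star}_{l}\|_{L_{2}(P_{X_{l}})}$, and Corollary~\ref{cor:ind-appr} bounds each summand by $C(d,\delta)\,n^{-1/(2d)+\delta}$ under the regularity condition \eqref{eq:reg-ps} and for $R$ large enough, so the whole bias term is $\lesssim J\,n^{-1/(2d)+\delta}$. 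The complexity input: Lemma 16.6 in \cite{gyorfi2002distribution} guarantees that the entropy condition \eqref{eq:entr-alt} holds for $\mathcal{H}_{n,r}(R)$ with $V$ proportional to $n$ (and $\mathcal{A}$ proportional to the weight bound $R$, which we take proportional to $n$).

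Substituting $V\sim n$ and $\mathcal{A}\sim n$ into the complexity term $J\sqrt{V\log(J\mathcal{A})/M}$ of \eqref{eq:back-bound}, and the approximation rate into the bias term (absorbing the arbitrarily small $\delta$ into the constants), produces the intermediate three-term bound displayed in the statement: an $n$-increasing complexity contribution of order $J\sqrt{nJ\log(\mathcal{A})/M}$, the $n$-independent statistical fluctuation $J\sqrt{\log(1/\delta)/M}$, and the $n$-decreasing approximation contribution $J\,n^{-1/(2d)}$. Only the first and third terms involve $n$; the fluctuation term is simply carried along unchanged.

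The heart of the argument is the balancing step. Dropping the logarithmic and $J$ factors, equating the increasing and decreasing contributions amounts to solving $\sqrt{n/M}\sim n^{-1/(2d)}$, i.e.\ $n^{1+1/d}\sim M$, so that the optimal number of neurons is $n\sim M^{d/(d+1)}$. Substituting this choice back collapses the two $n$-dependent contributions into a single common rate, which, together with the untouched fluctuation term, yields the claimed bound
\[
0\leq Q_{k-1}(h_{k-1}^{\star},\ldots,h_{J}^{\star})-Q_{k-1}(\widehat{h}_{k-1},\ldots,\widehat{h}_{J})\lesssim J\left(\sqrt{\frac{J}{M^{1+1/d}}}+\sqrt{\frac{\log(1/\delta)}{M}}\right)
\]
with probability at least $1-\delta$.

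The step I expect to be the main (and essentially the only nontrivial) obstacle is tracking the $M$-exponent cleanly through the balance. One must be sure that $V$ grows \emph{linearly} in $n$, so that the complexity term scales like $\sqrt{n/M}$ rather than like $\sqrt{\log n/M}$; otherwise the optimal $n$ and the resulting rate change completely. One must also apply the approximation exponent $-1/(2d)$ of Corollary~\ref{cor:ind-appr} \emph{uniformly} over the exercise dates $l=k-1,\ldots,J-1$, which costs only the visible factor $J$ but requires the regularity assumption \eqref{eq:reg-ps} at every date. Everything else is bookkeeping: the additivity of the per-step errors is already supplied by Proposition~\ref{errstep} and inherited by \eqref{eq:back-bound}, and the constants depending on $C_Z$, $d$ and the arbitrarily small $\delta$ are absorbed into $\lesssim$. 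I would finally record that the bound is non-asymptotic in $M$ and $\delta$ but asymptotic in $n$, so the prescription $n\sim M^{d/(d+1)}$ is what realizes the stated balanced rate.
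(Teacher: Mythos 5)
Your route is exactly the paper's intended one: the paper offers no separate proof of Corollary~\ref{Neur} (the proof environment that follows it belongs to the theorem giving \eqref{eq:back-bound}), and the corollary is meant to follow precisely as you describe, by inserting into \eqref{eq:back-bound} the two inputs of Remark~\ref{rem:appr} --- the approximation rate $n^{-1/(2d)+\delta}$ from Corollary~\ref{cor:ind-appr} under the regularity condition \eqref{eq:reg-ps}, and the entropy parameters $V\propto n$, $\mathcal{A}\propto n$ from Lemma~16.6 of \cite{gyorfi2002distribution} --- and then balancing in $n$. Your intermediate three-term bound and your balancing equation $n^{1+1/d}\sim M$, i.e.\ $n\sim M^{d/(d+1)}$, are both correct.

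The gap is your final assertion that this substitution ``yields the claimed bound.'' It does not: at $n\sim M^{d/(d+1)}$ the two $n$-dependent terms are each of order $M^{-1/(2(d+1))}$ up to powers of $J$ and logarithms, so the balanced contribution is $J\sqrt{J/M^{1/(1+d)}}$, whereas the corollary as printed asserts $J\sqrt{J/M^{1+1/d}}=J^{3/2}M^{-(d+1)/(2d)}$. The exponents $\frac{1}{2(d+1)}$ and $\frac{d+1}{2d}$ never coincide (for $d=2$: $1/6$ versus $3/4$), and the printed rate is even faster than $M^{-1/2}$, which no choice of $n$ can extract from the complexity--approximation tradeoff: the minimum over $n$ of the sum of the first and third terms is of order $M^{-1/(2(d+1))}$, far above $M^{-1/2}$. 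In other words, your own computation actually produces the exponent $1/(1+d)$ in the denominator, and the printed $1+1/d$ is almost certainly a typo in the paper; but by declaring agreement with the printed exponent without checking it, your proof as written contains a false step. You should either state and derive the corrected bound $J\bigl(\sqrt{J/M^{1/(1+d)}}+\sqrt{\log(1/\delta)/M}\bigr)$ --- also noting explicitly that the factor $\log(\mathcal{A})\sim\log M$ is absorbed into $\lesssim$, as the paper silently does --- or keep the printed statement but flag the discrepancy; as it stands, the last step of your argument is not valid.
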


\begin{proof}
Using (\ref{b1}), (\ref{b2}), (\ref{qkexp}), we have%
\begin{align}
0  &  \leq Q_{k-1}(h_{k-1}^{\star},\ldots,h_{J}^{\star})-Q_{k-1}(\widehat
{h}_{k-1},\ldots,\widehat{h}_{J})\nonumber\\
&  =\inf_{h_{k-1}\in\mathcal{H}}\left(  Q_{k-1}(h_{k-1}^{\star},\ldots
,h_{J}^{\star})-\widehat{Q}_{k-1}(h_{k-1},\widehat{h}_{k},\ldots,\widehat
{h}_{J})\right) \nonumber\\
&  +\widehat{Q}_{k-1}(\widehat{h}_{k-1},\ldots,\widehat{h}_{J})-Q_{k-1}%
(\widehat{h}_{k-1},\ldots,\widehat{h}_{J})\nonumber\\
&  \leq\inf_{h_{k-1}\in\mathcal{H}}\left(  Q_{k-1}(h_{k-1}^{\star}%
,\ldots,h_{J}^{\star})-Q_{k-1}(h_{k-1},\widehat{h}_{k},\ldots,\widehat{h}%
_{J})\right) \nonumber\\
&  +2\sup_{h_{k-1},\ldots,h_{J}\in\mathcal{H}}\left\vert Q_{k-1}(h_{k-1}%
,h_{k},\ldots,h_{J})-\widehat{Q}_{k-1}(h_{k-1},h_{k},\ldots,h_{J})\right\vert
\nonumber\\
&  \leq Q_{k}(h_{k}^{\star},\ldots,h_{J}^{\star})-Q_{k}(\widehat{h}_{k}%
,\ldots,\widehat{h}_{J})\nonumber\\
&  +\inf_{h_{k-1}\in\mathcal{H}}\mathsf{E}\left[  \left(  Z_{k-1}%
-C_{k-1}^{\star}\right)  \left(  h_{k-1}^{\star}(X_{k-1})-h_{k-1}%
(X_{k-1})\right)  \right] \\
&  +2\sup_{h_{k-1},\ldots,h_{J}\in\mathcal{H}}\left\vert Q_{k-1}(h_{k-1}%
,h_{k},\ldots,h_{J})-\widehat{Q}_{k-1}(h_{k-1},h_{k},\ldots,h_{J})\right\vert
, \label{lstep}%
\end{align}
where the last step follows from Proposition~\ref{errstep}. Set
\[
g_{\mathbf{h}}(X_{k-1},\ldots,X_{J})=\sum_{j=k-1}^{J}Z_{j}h_{j}(X_{j}%
)\prod_{l=k-1}^{j-1}(1-h_{l}(X_{l})),\quad\mathbf{h}=(h_{k-1},\ldots,h_{J}),
\]
then
\begin{multline*}
\widehat{Q}_{k-1}(h_{k-1},\ldots,h_{J})-Q_{k-1}(h_{k-1},\ldots,h_{J})\\
=\frac{1}{M}\sum_{m=1}^{M}\left\{  g_{\mathbf{h}}(X_{k-1}^{(m)},\ldots
,X_{J}^{(m)})-\mathsf{E}[g_{\mathbf{h}}(X_{k-1}^{(m)},\ldots,X_{J}%
^{(m)})]\right\}  .
\end{multline*}
Now consider the class $\mathcal{G}:=\{g_{\mathbf{h}},\,\mathbf{h}%
\in\mathcal{H}^{(J-k+1)}\}$ of uniformly bounded functions on $\mathbb{R}%
^{d(J-k+1)}.$ Indeed we have $|g_{\mathbf{h}}|\leq C_{Z}.$ Moreover
\begin{align}
\label{eq:entr-g}\mathcal{N}(\delta,\mathcal{G},L_{2}(P))\leq(\mathcal{A}%
/\delta)^{JV}%
\end{align}
under \eqref{eq:entr-alt}. Denote
\[
Z=\sqrt{M}\sup_{h_{k-1},\ldots,h_{J}\in\mathcal{H}}\left\vert Q_{k-1}%
(h_{k-1},h_{k},\ldots,h_{J})-\widehat{Q}_{k-1}(h_{k-1},h_{k},\ldots
,h_{J})\right\vert ,
\]
then the Talagrand inequality (see \cite{talagrand1994sharper} and
\cite{gine2002rates}) yields
\begin{align*}
\mathsf{P}(Z\geq\mathsf{E}[Z]+\sqrt{x(4C_{Z}\mathsf{E}[Z]+M)} +C_{Z} x/3)\leq
e^{-x},
\end{align*}
where
\begin{align*}
\mathsf{E}[Z]\leq\sqrt{M J \log(\mathcal{A} C_{Z})}%
\end{align*}
provided \eqref{eq:entr-g} holds.
\end{proof}

\section{Implementation of the Bermudan max-call}

\label{numsec}

In this section we implement  the pricing of the Bermudan max-call,  a benchmark example in the literature \cite{AndersenBroadie}. As underlying we take a
$d$-dimensional Black \& Scholes model, with log-price dynamics given by
\[
dX_{t}^{i}=\sigma dW^{i}_{t} + (r-\delta-\frac{\sigma^{2}}{2} )dt, \quad
i=1,\dots, d,
\]
where $X_{0}^{i}=0$ and $W^{i}$, $i=1,\dots, d,$ are independent Brownian
Motions. Parameters $\sigma, r, \delta$ represent respectively volatility,
interest, and dividend rate. The corresponding stock prices are given by
$S^{i}_{t}=S_{0}^{i} \exp(X^{i}_{t}),\,t\in[0,T]$. Our goal is the price of a
Bermudan max-call option, given by the following representation,
\[
\sup_{\tau\in\mathcal{T}} E [e^{-r\tau} \max_{i=1,\dots,d}(S^{i}_{\tau}%
-K)_{+}],
\]
where $\mathcal{T}$ is the set of stopping times in $\{t_{0}=0,t_{1},t_{2}%
,\dots,t_{J}=T\}$ adapted to the process $X=(X^{1},\dots,X^{d})$, and
$(\cdot)_{+}$ denotes the positive part.

\subsection{Backward approach}

In order to implement a randomized stopping strategy we need some suitable
parametric choice for $h$. For example we may take $h_{l}(x)$ to be the
composition of a polynomial in $x$ with the logistic function $\frac{e^{x}%
}{1+e^{x}}$ (cf. \cite[Framework 3.1]{becker2019solving} and \cite[Section
2.2]{becker2018deep}), i.e.
\begin{equation}
h_{l}(x)=h_{\theta_{l}}(x)=\frac{e^{pol_{\theta_{l}}^{g}(x)}}{1+e^{pol_{\theta
_{l}}^{g}}(x)},\text{ \ \ }l=0,...,J, \label{ero_pol}%
\end{equation}
where $pol_{\theta}^{g}(x)$ is a polynomial of degree $g$ in $x$ and $\theta$
is the vector of its coefficients. As another example we may compose
polynomials in $x$ with a Gumble type distribution function $\beta
(x):=1-\exp(-\exp(x)),$ i.e.%
\begin{equation}
h_{l}(x)=h_{\theta_{l}}(x)=1-\exp(-\exp(pol_{\theta_{l}}^{g}(x))),\text{
\ \ }l=0,...,J. \label{par1}%
\end{equation}
Both functions are smooth approximations of the indicator function.
Let us choose the latter and carry out the backward procedure in Section
\ref{back}. Assume that we have already determined parameters $\widehat{\theta}%
_{k},\dots,\widehat{\theta}_{J-1}$ (hence the corresponding functions
$\widehat{h}_{k},\dots,\widehat{h}_{J-1},$ $\widehat{h}_{J}=h_{J}^{\star}=1$).
Now, according to \eqref{opt}, the estimate of $\widehat{\theta}_{k-1}$ is
given by the maximization over $\theta$ of the function
\begin{equation}
\widehat{L}_{k-1}(h_{\theta},\widehat{h}_{k},\dots,\widehat{h}_{J})=\sum
_{m=1}^{M}\xi_{k-1}^{(m)}h_{\theta}(X_{k-1}^{(m)}), \label{loss}%
\end{equation}
where $\xi_{k-1}^{(m)}$ is as in \eqref{opt}. The corresponding gradient is given
by
\begin{equation}
\nabla_{\theta}\widehat{L}_{k-1}(h_{\theta},\widehat{h}_{k},\dots,\widehat
{h}_{J})=\sum_{m=1}^{M}\xi_{k-1}^{(m)}\nabla_{\theta}h_{\theta}(X_{k-1}%
^{(m)}). \label{grad_loss}%
\end{equation}
The parametric choice \eqref{par1} allows for the following representation of
the $\theta$-gradient. We may write straightforwardly%
\begin{equation}
\nabla_{\theta}h_{\theta}(x)=(1-h_{\theta}(x))\exp\left(  pol_{\theta}%
^{g}(x)\right)  \,\nabla_{\theta}pol_{\theta}^{g}(x) \label{der_ero_pol}%
\end{equation}
and since $\theta$ is the vector of the coefficients of $pol_{\theta}^{g}$,
the gradient $\nabla_{\theta}pol_{\theta}$ is the vector of monomials in $x$
of degree less or equal than $g$. Injecting this representation in
\eqref{grad_loss} we get an explicit expression for the gradient of the
objective function that we can feed into the optimization algorithm. In fact,
the catch of the randomization is the smoothness of $h_{\theta}$ in
\eqref{par1} with respect to $\theta.$ This in turn allows for gradient based
optimization procedures with explicitly given objective function and gradient.
However, a non-trivial issue is how to find the global maximum, at each step,
of the function $\widehat{L}$. This is also a well know issue in machine
learning, see for instance \cite[Section 2.6]{becker2019solving} or \cite[Section 2.3]{becker2018deep}. We do not dig into this question in the
present paper and just refer to \cite{becker2018deep,becker2019solving} for relevant literature.

\subsection{Forward approach}

We can alternatively write $h_j(X_j)=h(X_j,t_j)$ with $h(x,t)=h_\theta(x,t)$ a function depending on a parameter $\theta$ to be optimized. In this case we use the forward approach, since the backward induction cannot be used with this type of parametrization.

As an example (analogous to \eqref{par1}), we consider
\begin{equation}\label{eq:stopping_function_time}
h_\theta(x,t)=1-\exp(-\exp(pol_\theta^{g}(x,t))),
\end{equation}
with $pol_\theta^g$ polynomial of degree $g$ in $x$ and $t$, from which
\[
\begin{split}
\nabla_\theta h(x,t)&=\exp(-\exp(pol_\theta^{g}(x,t)))
\exp(pol_\theta^{g}(x,t)) \nabla_\theta pol_\theta^{g}(x,t) \\
&=
(1-h_\theta(x,t))
\exp(pol_\theta^{g}(x,t)) \nabla_\theta pol_\theta^{g}(x,t)
\end{split}
\]
As before, we want to maximize over $\theta$ the payoff
\[
\mathcal{P}=E\Big[ \sum_{j=1}^J Z_j(X_j) p_j^{\theta} (X_1,\dots,X_j)\Big].
\]
We have
\[
\nabla_\theta \mathcal{P}=E\Big[ \sum_{j=1}^J Z_j(X_j)  \nabla_\theta p_j^{\theta} (X_1,\dots,X_j)\Big]
\]
with $p_j^{{\theta}} (X_1,\dots,X_j)$ as in \eqref{rep}. Explicit computations give now
\[
\begin{split}
&\nabla_{\theta} p_j^{{\theta}} (X_1,\dots,X_j)
=
p_j^{{\theta}} (X_1,\dots,X_j) \\&
\bigg(
\frac{1}{h_{\theta} (X_j,t_j)}
\exp(pol_\theta^{g}(X_{j},t_j)) \nabla_\theta pol_\theta^{g}(X_{j},t_j)
-
 \sum_{l=1}^{j}
\exp(pol_\theta^{g}(X_l,t_l)) \nabla_\theta pol_\theta^{g}(X_l,t_l)
\bigg)
\end{split}\]
for $j=1,\dots, J$. We can compute $\nabla_\theta \mathcal{P}$
and use in the optimization this explicit expression for the gradient of the loss function.

\subsection{Numerical results}

We take parameters $r=0.05, \delta=0.1, \sigma=0.2$ (as in \cite{
AndersenBroadie,becker2019solving}). We first compute the stopping functions $h$ in \eqref{par1}
using the backward method with $M=10^{7}$ trajectories. The price is then
re-computed using $10^{7}$ independent trajectories. We compute each step in
the backward optimization as described in the previous section, using
polynomials of degree three in the case of two stocks (ten parameters
for each time in $t_{0},t_{1},\dots,t_{J}$).
We take $J=9$ and $T=3$, with $t_{i}=i/3, \, i=1,\dots,9$.

Then, we also implement the time-dependent stopping function in \eqref{eq:stopping_function_time} and optimize it using the forward method on the same example, this time using polynomials of degree four.
Results and
relative benchmark are given in Table~\ref{table:maxcall}. We report results obtained in \cite{becker2019solving} using neural networks (NN) and the confidence intervals (CI) given in \cite{AndersenBroadie}.

The experiments were also repeated with the alternative parametrization
(\ref{ero_pol}), with comparable numerical results.

\begin{table}[ptb]
\label{table:maxcall}%
\begin{center}
\begin{tabular}
{|c|c|c|c|c|c|}%
\hline
  $S_{0}$ & $K$ & {Backward, $g=3$}
& Forward, $g=4$ & NN price in
\cite{becker2019solving} & $95\%$ CI in \cite{AndersenBroadie}  \\\hline\hline
  $90$ & $100$ &
$8.072$
& $8.055$ & $8.072$ & $[8.053,8.082]$  \\\hline
  $100$ & $100$ &
$13.728$
&$13.882$ & $13.899$ & $[13.892,13.934]$  \\\hline
\end{tabular}
\end{center}
\caption{Bermudan max-call prices for Black-Scholes model, with $d=2, T=3, J=9$ and $r=0.05, \delta=0.1, \sigma=0.2$. }%
\end{table}

\appendix

\section{Proofs of auxiliary results}

\label{sec:proofs-auxil-results}

\subsection{Proof of Lemma~\ref{lem:Delta-d}}

Set
\[
T=\sum_{j=0}^{J}Z_{j}h_{1,j}(X_{j})\prod_{l=0}^{j-1}(1-h_{1,l}(X_{l}%
))-\sum_{j=0}^{J}Z_{j}h_{2,j}(X_{j})\prod_{l=0}^{j-1}(1-h_{2,l}(X_{l})).
\]
We have
\begin{align*}
T  &  =\sum_{j=0}^{J}Z_{j}\left(  h_{1,j}(X_{j})-h_{2,j}(X_{j})\right)
\prod_{l=0}^{j-1}(1-h_{2,l}(X_{l}))\\
&  +\sum_{j=0}^{J}Z_{j}h_{1,j}(X_{j})\left[  \prod_{l=0}^{j-1}(1-h_{1,l}%
(X_{l}))-\prod_{l=0}^{j-1}(1-h_{2,l}(X_{l}))\right]  .
\end{align*}
Due to the simple identity%

\[
\prod_{k=1}^{K}a_{k}-\prod_{k=1}^{K}b_{k}=\sum_{k=1}^{K}(a_{k}-b_{k}%
)\prod_{l=1}^{k-1}a_{l}\prod_{r=k+1}^{K}b_{r},
\]
we derive
\begin{multline*}
\prod_{l=0}^{j-1}(1-h_{1,l}(X_{l}))-\prod_{l=0}^{j-1}(1-h_{2,l}(X_{l}))=\\
\sum_{l=0}^{j-1}\left(  h_{2,l}(X_{l})-h_{1,l}(X_{l})\right)  \prod
_{s=0}^{l-1}(1-h_{2,s}(X_{s}))\prod_{m=l+1}^{j-1}(1-h_{1,m}(X_{m})).
\end{multline*}
Hence
\begin{align*}
T  &  =\sum_{j=0}^{J}Z_{j}\left(  h_{1,j}(X_{j})-h_{2,j}(X_{j})\right)
\prod_{l=0}^{j-1}(1-h_{2,l}(X_{l}))\\
&  +\sum_{j=0}^{J}Z_{j}h_{1,j}(X_{j})\sum_{l=r}^{j-1}\left(  h_{2,l}%
(X_{l})-h_{1,l}(X_{l})\right)  \prod_{s=0}^{l-1}(1-h_{1,s}(X_{s}%
))\prod_{m=l+1}^{j-1}(1-h_{2,m}(X_{m}))\\
&  =\sum_{j=0}^{J}Z_{j}\left(  h_{1,j}(X_{j})-h_{2,j}(X_{j})\right)
\prod_{l=0}^{j-1}(1-h_{2,l}(X_{l}))\\
&  +\sum_{l=0}^{J-1}\left(  h_{2,l}(X_{l})-h_{1,l}(X_{l})\right)  \prod
_{s=0}^{l-1}(1-h_{2,s}(X_{s}))\left\{  \sum_{j=l+1}^{J}Z_{j}h_{1,j}%
(X_{j})\prod_{m=l+1}^{j-1}(1-h_{1,m}(X_{m}))\right\}
\end{align*}
and
\[
|T|\leq C_{Z}\sum_{j=0}^{J-1}\left|  h_{1,j}(X_{j})-h_{2,j}(X_{j})\right|
\prod_{l=r}^{j-1}(1-h_{2,l}(X_{l})).
\]

\subsection{Proof of Proposition~\ref{prop:DeltaX-Delta}}

\begin{lemma}
Let $(\tau_{1,j})$ and $(\tau_{2,j})$ be two consistent families of randomized
stopping times, then
\end{lemma}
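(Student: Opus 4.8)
The plan is to reduce the assertion to a single one-step dynamic-programming recursion for the conditional values and then to telescope it. I write $V_{i,k}:=\mathsf{E}[Z_{\tau_{i,k}}\mid\mathcal{F}_k]$ and $C_{i,k}:=\mathsf{E}[V_{i,k+1}\mid\mathcal{F}_k]$ for $i=1,2$, and let $h_{i,j}$ denote the exercise functions generating $(\tau_{i,j})$ as in \eqref{rep}. The consistency property $\tau_{i,k}>k\Rightarrow\tau_{i,k}=\tau_{i,k+1}$ together with the construction in \eqref{eq:p-recu} gives, upon integrating out the independent uniform $\mathcal{U}$ against $\mathcal{F}$ and using that $h_{i,k}(X_k)$ is $\mathcal{F}_k$-measurable, the one-step relation $V_{i,k}=h_{i,k}(X_k)Z_k+(1-h_{i,k}(X_k))C_{i,k}$.

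First I would subtract the two recursions and rearrange by adding and subtracting the \emph{common} continuation value $C_{2,k}$ inside the continuation term. Collecting the contributions proportional to $Z_k$ and to $C_{2,k}$ yields the clean one-step identity
\begin{multline*}
V_{1,k}-V_{2,k}=(h_{1,k}(X_k)-h_{2,k}(X_k))(Z_k-C_{2,k})\\
+(1-h_{1,k}(X_k))\,\mathsf{E}[V_{1,k+1}-V_{2,k+1}\mid\mathcal{F}_k].
\end{multline*}
The crucial algebraic point here is that $C_{2,k}$ absorbs the mismatch between the two survival factors, so that the entire local discrepancy is concentrated in the margin term $(h_{1,k}-h_{2,k})(Z_k-C_{2,k})$.

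Next I would iterate this identity from $k$ up to the terminal index $J$, where $h_{1,J}\equiv h_{2,J}\equiv1$ forces $V_{1,J}=V_{2,J}=Z_J$ and hence terminates the recursion. Applying the tower property at each stage pulls the $\mathcal{F}_k$-measurable factors $(1-h_{1,l}(X_l))$ of policy $1$ out of the inner expectations and collects them into the products $\prod_{l=k}^{j-1}(1-h_{1,l}(X_l))$, giving the telescoped representation
\begin{multline*}
\mathsf{E}[Z_{\tau_{1,k}}-Z_{\tau_{2,k}}\mid\mathcal{F}_k]\\
=\mathsf{E}\!\left[\sum_{j=k}^{J}(h_{1,j}(X_j)-h_{2,j}(X_j))(Z_j-C_{2,j})\prod_{l=k}^{j-1}(1-h_{1,l}(X_l))\;\middle|\;\mathcal{F}_k\right],
\end{multline*}
which is the claimed assertion.

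The main obstacle I anticipate is not the algebra but the measure-theoretic bookkeeping specific to \emph{randomized} stopping times: one must carefully justify the one-step recursion on the enlarged space $(\widetilde{\Omega},\widetilde{\mathcal{F}})$, integrating out the auxiliary uniform $\mathcal{U}$ as in the derivation of \eqref{eq:p-recu}, and then track the nested conditional expectations when unrolling, keeping the deeper margin terms $(Z_j-C_{2,j})$ inside while extracting the survival factors. Once this identity is in hand, it is exactly what drives the proof of Proposition~\ref{prop:DeltaX-Delta}: taking $(\tau_{2,j})=(\tau_j^{\star})$, so that $C_{2,j}=C_j^{\star}$ and $h_{2,j}=h_j^{\star}=1_{S_j^{\star}}$, the dynamic programming principle makes every summand nonnegative and turns the right-hand side into $\mathsf{E}\sum_j|h_j^{\star}-h_j|\,|Z_j-C_j^{\star}|\prod_{l}(1-h_l)$, to which the margin condition \eqref{BA} can then be applied.
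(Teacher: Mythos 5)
Your proposal is correct and follows essentially the same route as the paper: the same one-step dynamic-programming recursion, obtained by adding and subtracting a pivot continuation value and then telescoped up to $J$ via the tower property. The only (cosmetic) difference is the choice of pivot — you put policy $2$'s continuation value $C_{2,j}$ in the margin term and policy $1$'s survival factors $\prod_{l}(1-h_{1,l})$ outside, whereas the paper does the reverse, with margin $Z_{l}-\mathsf{E}_{l}[Z_{\tau_{1,l+1}}]$ and factors $\prod_{k}(1-q_{2,k})$ — so your identity is exactly the paper's after the relabeling $1\leftrightarrow 2$ and a sign change, and, as you note, it feeds into Proposition~\ref{prop:DeltaX-Delta} equally well once the optimal family is assigned to label $2$ rather than label $1$.
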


\[
\mathsf{E}_{r}\left[  Z_{\tau_{1,r}}-Z_{\tau_{2,r}}\right]  =\mathsf{E}%
_{r}\left[  \sum_{l=r}^{J-1}\left\{  Z_{l}-\mathsf{E}_{l}\left[
Z_{\tau_{1,l+1}}\right]  \right\}  \left(  q_{1,l}-q_{2,l}\right)  \prod
_{k=r}^{l-1}\left(  1-q_{2,k}\right)  \right]
\]
with $q_{i,j}=\widetilde{\mathsf{P}}\left(  \tau_{i,j}=j\right)  ,$ $i=1,2,$
and $\mathsf{E}_{r}[\cdot] =\mathsf{E}[\cdot|\mathcal{F}_{r}].$

\begin{proof}
We have
\begin{align*}
\mathsf{E}_{r}\left[  Z_{\tau_{1,r}}-Z_{\tau_{2,r}}\right]   &  =\left\{
Z_{r}-\mathsf{E}_{r}\left[  Z_{\tau_{1,r+1}}\right]  \right\}  \left(
\widetilde{\mathsf{P}}\left(  \tau_{1,r}=r\right)  -\widetilde{\mathsf{P}%
}\left(  \tau_{2,r}=r\right)  \right) \\
&  +\mathsf{E}_{r}\left[  \left(  \mathsf{E}_{r+1}\left[  Z_{\tau_{1,r+1}%
}-Z_{\tau_{2,r+1}}\right]  \right)  \widetilde{\mathsf{P}}\left(  \tau
_{2,r}>r\right)  \right] \\
&  =\left\{  Z_{r}-\mathsf{E}_{r}\left[  Z_{\tau_{1,r+1}}\right]  \right\}
\left(  q_{1,r}-q_{2,r}\right) \\
&  +\mathsf{E}_{r}\left[  \left(  \mathsf{E}_{r+1}\left[  Z_{\tau_{1,r+1}%
}-Z_{\tau_{2,r+1}}\right]  \right)  \left(  1-q_{2,r}\right)  \right]  .
\end{align*}
By denoting $\Delta_{r}=\mathsf{E}_{r}\left[  Z_{\tau_{1,r}}-Z_{\tau_{2,r}%
}\right]  ,$ we derive the following recurrent relation
\[
\Delta_{r}=\left\{  Z_{r}-\mathsf{E}_{r}\left[  Z_{\tau_{1,r+1}}\right]
\right\}  \left(  q_{1,r}-q_{2,r}\right)  +\mathsf{E}_{r}\left[  \Delta
_{r+1}\right]  \left(  1-q_{2,r}\right)  ,
\]
where all quantities with index $r$ are $\mathcal{F}_{r}-$measurable.
\end{proof}

Using the property \eqref{eq:tau-star} we derive an important corollary.

\begin{corollary}
\label{eq: cor-delta} It holds for any consistent family $(\tau_{r})_{r=0}%
^{J}$ of randomized stopping times,
\[
\mathsf{E}\left[  Z_{\tau_{r}^{\star}}-Z_{\tau_{r}}\right]  =\mathsf{E}\left[
\sum_{l=0}^{J-1}\left|  Z_{l}-\mathsf{E}_{l}\left[  Z_{\tau_{l+1}^{\star}%
}\right]  \right|  \left|  q_{l}^{\star}-q_{l}\right|  \prod_{k=r}%
^{l-1}\left(  1-q_{k}\right)  \right]  ,
\]
where $q_{l}=\widetilde{\mathsf{P}}\left(  \tau_{l}=l\right)  $ and $q^{\star
}_{l}=1_{\{\tau_{l}^{\star}=l\}}.$
\end{corollary}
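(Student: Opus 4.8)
The plan is to derive the corollary from the preceding Lemma by specializing it to $\tau_{1,j}=\tau_j^\star$ and $\tau_{2,j}=\tau_j$, and then to exploit the characterization of the optimal stopping regions $S_l^\star$ in order to rewrite each signed summand as a product of absolute values. First I would set $\tau_{1,j}=\tau_j^\star$ in the Lemma. Since $(\tau_j^\star)$ is a family of first entry times, see \eqref{eq:tau-star}, each $\tau_l^\star$ is an \emph{ordinary} stopping time with $\{\tau_l^\star=l\}=\{X_l\in S_l^\star\}$, so that $q_{1,l}=\widetilde{\mathsf{P}}(\tau_l^\star=l)=1_{\{\tau_l^\star=l\}}=q_l^\star$ is a $0/1$-valued indicator, while $q_{2,l}=q_l$. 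By consistency of $(\tau_j^\star)$ together with the Markov property one has $\mathsf{E}_l[Z_{\tau_{l+1}^\star}]=\mathsf{E}[Y_{l+1}^\star\mid\mathcal{F}_l]=C_l(X_l)=:C_l^\star$, so the factor $Z_l-\mathsf{E}_l[Z_{\tau_{1,l+1}}]$ appearing in the Lemma becomes $G_l(X_l)-C_l(X_l)$.

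The crux of the argument is a sign-matching observation. On the event $\{q_l^\star=1\}=\{X_l\in S_l^\star\}=\{G_l(X_l)\geq C_l(X_l)\}$ one has simultaneously $Z_l-C_l^\star\geq0$ and $q_l^\star-q_l=1-q_l\geq0$; on the complementary event $\{q_l^\star=0\}$ one has $Z_l-C_l^\star<0$ and $q_l^\star-q_l=-q_l\leq0$. In either case the two factors carry the same sign, whence
\[
\bigl(Z_l-C_l^\star\bigr)\bigl(q_l^\star-q_l\bigr)=\bigl|Z_l-C_l^\star\bigr|\,\bigl|q_l^\star-q_l\bigr|.
\]
This is precisely where the dynamic programming principle, already recorded in Proposition~\ref{errstep}, enters, and it explains the appearance of the absolute values in the statement of the corollary.

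It then remains to take the (unconditional) expectation $\mathsf{E}[\,\cdot\,]$ of both sides of the Lemma's identity and to substitute the three identifications above, which yields the claimed formula; note that all factors $1-q_k$ are nonnegative, so no further sign bookkeeping is needed in the product $\prod_{k=r}^{l-1}(1-q_k)$. I expect the only genuinely substantive step to be the sign-matching displayed above, while the rest is a direct substitution into the Lemma together with the tower property of conditional expectation.
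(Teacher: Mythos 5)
Your proposal is correct and follows essentially the same route as the paper, which derives the corollary by specializing the preceding lemma to $\tau_{1,j}=\tau_j^\star$, $\tau_{2,j}=\tau_j$ and invoking the first-entry-time structure \eqref{eq:tau-star}; your sign-matching argument (nonnegativity of $Z_l-C_l^\star$ on $\{q_l^\star=1\}$ and negativity on $\{q_l^\star=0\}$, paired with $q_l^\star-q_l\gtrless 0$ accordingly) is exactly the step the paper leaves implicit. Your identification $\mathsf{E}_l[Z_{\tau_{l+1}^\star}]=C_l^\star$ via optimality and the tower property is also the intended reading, so nothing is missing.
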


Denote
\[
\mathcal{A}_{l}:=\left\{  \left|  G_{l}(X_{l})-C_{l}^{\star}(X_{l})\right|
>\delta\right\}  ,\quad l=0,\ldots,J-1,
\]
then
\begin{align*}
\Delta(\boldsymbol{h})  &  \geq\mathsf{E}\left[  \sum_{l=0}^{J-1}%
1_{\mathcal{A}_{l}}\left|  Z_{l}-\mathsf{E}_{l}\left[  Z_{\tau_{l+1}^{\star}%
}\right]  \right|  \left|  q_{l}^{\star}-q_{l}\right|  \prod_{k=0}%
^{l-1}\left(  1-q_{k}\right)  \right] \\
&  =\delta\left\{  \mathsf{E}\left[  \sum_{l=0}^{J-1}\left|  h_{l}^{\star
}(X_{l})-h_{l}(X_{l})\right|  \prod_{k=0}^{l-1}\left(  1-h_{k}(X_{k})\right)
\right]  -\sum_{l=0}^{J-1}\mathsf{P}\left(  \overline{\mathcal{A}_{l}}\right)
\right\}  .
\end{align*}
Due to (\textbf{B})
\[
\sum_{l=0}^{J-1}\mathsf{P}\left(  \overline{\mathcal{A}_{l}}\right)  \leq
A_{0}\delta^{\alpha},\quad A_{0}=\sum_{l=0}^{J-1} A_{0,l}
\]
and hence
\[
\Delta(\boldsymbol{h})\geq\delta\left\{  \frac{d_{X}^{2}(\boldsymbol{h}%
^{\star},\boldsymbol{h})}{J}-A_{0}\delta^{\alpha}\right\}  \geq\delta\left\{
\frac{\Delta_{X}^{2}(\boldsymbol{h}^{\star},\boldsymbol{h})}{JC_{Z}^{2}}%
-A_{0}\delta^{\alpha}\right\}
\]
due to Lemma~\ref{lem:Delta-d}. Taking maximum of the right hand side in
$\delta,$ we get
\[
\Delta_{X}(\boldsymbol{h}^{\star},\boldsymbol{h})\leq c_{\alpha}\sqrt{J}%
\Delta^{\alpha/(2(1+\alpha))}(\boldsymbol{h})
\]
for some constant $c_{\alpha}$ depending on $\alpha$ only.

\subsection{Proof of Proposition~\ref{errstep}}

By (\ref{qkexp})\ we may write,%
\begin{align*}
&  0\leq Q_{k-1}(h_{k-1}^{\star},h_{k}^{\star},\ldots,h_{J}^{\star}%
)-Q_{k-1}(h_{k-1},h_{k},\ldots,h_{J})\\
&  =\mathsf{E}\left[  Z_{k-1}h_{k-1}^{\star}(X_{k-1})-Z_{k-1}h_{k-1}%
(X_{k-1})\right] \\
&  +\mathsf{E}\left[  (1-h_{k-1}^{\star}(X_{k-1}))\mathsf{E}\left[  \left.
\sum_{j=k}^{J}Z_{j}h_{j}^{\star}(X_{j})\prod_{l=k}^{j-1}(1-h_{l}^{\star}%
(X_{l}))\right\vert \mathcal{F}_{k-1}\right]  \right] \\
&  -\mathsf{E}\left[  (1-h_{k-1}(X_{k-1}))\mathsf{E}\left[  \left.  \sum
_{j=k}^{J}Z_{j}h_{j}(X_{j})\prod_{l=k}^{j-1}(1-h_{l}(X_{l}))\right\vert
\mathcal{F}_{k-1}\right]  \right] \\
&  =\mathsf{E}\left[  \left(  Z_{k-1}-C_{k-1}^{\star}\right)  h_{k-1}^{\star
}(X_{k-1})+C_{k-1}^{\star}-Z_{k-1}h_{k-1}(X_{k-1})\right]  -Q_{k}(h_{k}%
,\ldots,h_{J})\\
&  +\mathsf{E}\left[  h_{k-1}(X_{k-1})\mathsf{E}\left[  \left.  \sum_{j=k}%
^{J}Z_{j}h_{j}(X_{j})\prod_{l=k}^{j-1}(1-h_{l}(X_{l}))\right\vert
\mathcal{F}_{k-1}\right]  \right] \\
&  =Q_{k}(h_{k}^{\star},h_{k}^{\star},\ldots,h_{J}^{\star})-Q_{k}(h_{k}%
,\ldots,h_{J})\\
&  +\mathsf{E}\left[  \left(  Z_{k-1}-C_{k-1}^{\star}\right)  \left(
h_{k-1}^{\star}(X_{k-1})-h_{k-1}(X_{k-1})\right)  \right] \\
&  +\mathsf{E}\left[  h_{k-1}(X_{k-1})\left(  \mathsf{E}\left[  \left.
\sum_{j=k}^{J}Z_{j}h_{j}(X_{j})\prod_{l=k}^{j-1}(1-h_{l}(X_{l}))\right\vert
\mathcal{F}_{k-1}\right]  -C_{k-1}^{\star}\right)  \right] \\
&  \leq Q_{k}(h_{k}^{\star},h_{k}^{\star},\ldots,h_{J}^{\star})-Q_{k}%
(h_{k},\ldots,h_{J})\\
&  +\mathsf{E}\left[  \left(  Z_{k-1}-C_{k-1}^{\star}\right)  \left(
h_{k-1}^{\star}(X_{k-1})-h_{k-1}(X_{k-1})\right)  \right]  .
\end{align*}

\section{Some auxiliary results}

\label{sec:some-results-from}

Let $\mathcal{X}\subset\mathbb{R}^{d}$ and let $\pi$ be a probability measure
on $\mathcal{X}$. We denote by $C(\mathcal{X})$ a set of all continuous
(possibly piecewise) functions on $\mathcal{X}$ and by $C^{s}(\mathcal{X})$
the set of all $s$-times continuously differentiable (possibly piecewise)
functions on $\mathcal{X}$. For a real-valued function $h$ on $\mathcal{X}%
\subset\mathbb{R}^{d}$ we write $\| h \|_{L^{p}(\pi)} = (\int_{\mathcal{X}}
|h(x)|^{p} \pi(x)dx)^{1/p}$ with $1\leq p < \infty$. The set of all functions
$h$ with $\| h \|_{L^{p}(\pi)}<\infty$ is denoted by $L^{p}(\pi)$.
If $\lambda$ is the Lebesgue measure, we write shortly $L^{p}$ instead of
$L^{p}(\lambda)$. The (real) Sobolev space is denoted by $W^{s,p}%
(\mathcal{X})$, i.e.,
\begin{align}
\label{eq:def_sobol}W^{s,p}(\mathcal{X}) := \left\{  u \in L^{p} : D^{\alpha}u
\in L^{p}, \quad\forall|\alpha| \leqslant s \right\}  ,
\end{align}
where $\alpha= (\alpha_{1},\ldots,\alpha_{d})$ is a multi-index with
$|\alpha|=\alpha_{1}+\ldots+\alpha_{d}$ and $D^{\alpha}$ stands for
differential operator of the form
\begin{align}
\label{eq:D-alpha}D^{\alpha} = \frac{\partial^{|\alpha|}}{\partial
x_{1}^{\alpha_{1}}\ldots\partial x_{d}^{\alpha_{d}}}.
\end{align}
Here all derivatives are understood in the weak sense. The Sobolev norm is
defined as
\[
\|u\|_{W^{s,p}(\mathcal{X})} = \sum\limits_{|\alpha|\le r}\|D^{\alpha
}u\|_{L^{p}}.
\]

\begin{theorem}
[Theorem 2.1 in \cite{mhaskar96}]\label{thm:approx} Let $1 \le r \le d, p\ge1,
n \ge1$ be integers, $\psi: \mathbb{R}^{r} \to\mathbb{R}$ be infinitely many
times differentiable in some open sphere in $\mathbb{R}^{r}$ and moreover,
there is $b \in\mathbb{R}^{r}$ in this sphere such that $D^{\alpha}\psi(b)
\neq0$ for all $\alpha$. Then there are $r \times d$ real matrices
$\{A_{j}\}_{j = 1}^{n}$ with the following property. For any $f \in
W^{s,p}(\mathcal{X})$ with $s\geq1$ there are coefficients $a_{j}(f)$
\[
\left\|  f - \sum\limits_{i = 1}^{n} a_{i}(f)\psi(A_{i}(\cdot) + b)\right\|
_{L^{p}(\mathcal{X})} \le\frac{c\|f\|_{W^{s,p}(\mathcal{X})}}{n^{s/d}},
\]
where $c$ is an absolute constant. Moreover, $a_{j}$ are continuous linear
functionals on $W^{s,p}(\mathcal{X})$ with $\sum_{j=1}^{n} |a_{j}|\leq C$ and
$C$ depending on $\|f\|_{L^{p}(\mathcal{X})}.$
\end{theorem}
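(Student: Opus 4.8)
The plan is to decouple the statement into two classical approximation problems and then compose their (linear) solution operators. First I would fix the relation between the polynomial degree and the number of neurons: write $\Pi_k$ for the space of real polynomials of degree at most $k$ in $d$ variables, so that its dimension $\binom{k+d}{d}$ is of order $k^{d}$. The whole proof rests on showing (i) that $f$ is well approximated in $L^{p}(\mathcal{X})$ by some $P\in\Pi_k$ at the Jackson rate $k^{-s}$, and (ii) that every such $P$ is in turn well approximated, in $L^{p}(\mathcal{X})$, by a linear combination of $n\asymp k^{d}$ functions of the prescribed form $\psi(A_{i}\cdot+b)$ with \emph{fixed} matrices $A_{i}$. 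Setting $n\asymp k^{d}$, i.e.\ $k\asymp n^{1/d}$, then turns the rate $k^{-s}$ into $n^{-s/d}$.

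For step (ii), which is the heart of the matter, I would exploit the smoothness of $\psi$ near $b$ together with the nonvanishing of all $D^{\alpha}\psi(b)$. The key observation is that for a fixed matrix $W\in\mathbb{R}^{r\times d}$ the one-parameter family $t\mapsto\psi(tWx+b)$ is real-analytic near $t=0$, with
\[
\frac{d^{m}}{dt^{m}}\,\psi(tWx+b)\Big|_{t=0}=\sum_{|\alpha|=m}\frac{m!}{\alpha!}\,(Wx)^{\alpha}\,D^{\alpha}\psi(b),
\]
a homogeneous polynomial of degree $m$ in $x$ that is nondegenerate because every $D^{\alpha}\psi(b)\neq0$. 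Each such parameter-derivative is a limit of divided differences of the functions $x\mapsto\psi((jh)Wx+b)$, $j=0,\dots,m$, evaluated at a fixed set of nodes; hence it is approximated, to any prescribed tolerance, by an explicit finite linear combination of functions of the admissible form. Letting $W$ range over finitely many generic matrices and using that $m$-th powers (equivalently, $m$-th parameter derivatives) of linear forms span the homogeneous polynomials of degree $m$, I would assemble a fixed family $\{\psi(A_{i}\cdot+b)\}_{i=1}^{n}$, with $n\asymp k^{d}$, whose span approximates all of $\Pi_k$; the coefficients expressing a given $P\in\Pi_k$ in this family are linear in the coefficients of $P$ and bounded in terms of them.

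For step (i) I would take $\mathcal{P}_{k}$ to be a fixed linear polynomial-approximation operator of Bramble--Hilbert / averaged-Taylor type, which satisfies $\|f-\mathcal{P}_{k}f\|_{L^{p}(\mathcal{X})}\le Ck^{-s}\|f\|_{W^{s,p}(\mathcal{X})}$ and whose output coefficients are continuous linear functionals of $f$ with norms controlled by $\|f\|_{L^{p}(\mathcal{X})}$. Composing the two linear maps---first $f\mapsto\mathcal{P}_{k}f$, then the fixed representation of $\Pi_k$ in the $\psi$-family---produces coefficients $a_{i}(f)$ that are continuous linear functionals of $f$; a triangle inequality splits the total error into the Jackson term $Ck^{-s}\|f\|_{W^{s,p}(\mathcal{X})}$ and the reproduction term from step (ii), the latter of which I drive below $k^{-s}\|f\|_{W^{s,p}(\mathcal{X})}$ by choosing the divided-difference step small enough. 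Substituting $k\asymp n^{1/d}$ yields the claimed bound $c\|f\|_{W^{s,p}(\mathcal{X})}/n^{s/d}$ with an absolute $c$, and the bound $\sum_{i}|a_{i}(f)|\le C$ follows from the boundedness of the two linear maps, with $C$ depending on $\|f\|_{L^{p}(\mathcal{X})}$ as asserted.

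The main obstacle I anticipate is making step (ii) simultaneously \emph{$f$-independent} and quantitatively controlled: the matrices $A_{i}$ must be chosen once and for all (depending only on $\psi,k,d$), yet the divided-difference reconstruction of the monomials introduces an error that shrinks only as the dilation step $h\to0$, while the corresponding coefficients grow like $h^{-m}$. The delicate point is to balance these against the target tolerance $k^{-s}\|f\|$---choosing $h$ as a function of $k$ so that the reproduction error is negligible against the Jackson error while the coefficient norms stay within the budget needed for $\sum_{i}|a_{i}|\le C$. A secondary technicality is handling the genuinely multivariate activation ($r>1$ rather than $r=1$): one must verify that, with $A_{i}$ ranging over matrices of the form $tW$, the resulting homogeneous pieces $(Wx)^{\alpha}$ still span $\Pi_k$ for a suitable finite collection of $W$'s, which uses $r\le d$ together with a genericity argument for the chosen directions.
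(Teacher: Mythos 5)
First, a point of reference: the paper itself gives no proof of this statement --- it is quoted verbatim (as Theorem~2.1) from \cite{mhaskar96} --- so the only meaningful comparison is with Mhaskar's original argument. Your two-step decomposition is essentially that argument: Mhaskar also combines (i) a Jackson-type estimate, approximating $f$ by a polynomial of degree $k$ at rate $k^{-s}\|f\|_{W^{s,p}}$ via a linear operator, with (ii) reproduction of polynomials by networks with \emph{fixed} matrices obtained from divided differences of $t\mapsto\psi(tWx+b)$ at $t=0$, exploiting $D^{\alpha}\psi(b)\neq0$ and the fact that powers of linear forms span the homogeneous polynomials; setting $n\asymp k^{d}$ yields $n^{-s/d}$. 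One repair is needed in step (i): an averaged-Taylor/Bramble--Hilbert operator does not deliver the rate $k^{-s}$ as the degree $k$ grows --- its error is governed by the domain diameter and uses polynomials of degree tied to $s$ --- so you must invoke a genuine Jackson theorem for algebraic polynomials on a cube (DeVore--Lorentz, or the de la Vall\'ee Poussin-type operators Mhaskar uses). That is a citation fix, not a conceptual one.

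The genuine gap is the ``moreover'' clause, and you in fact put your finger on it without resolving it. In your scheme the reproduction error in step (ii) is $O(h)$ while the outer coefficients are of size $h^{-m}$ for $m\le k$. Since the total error must be $\le c\,n^{-s/d}\|f\|_{W^{s,p}}$ uniformly over the unit ball of $W^{s,p}$ (which is what the linear-functional formulation requires), you are forced to take $h\lesssim n^{-s/d}$, and then $\sum_i|a_i(f)|$ contains contributions of order $h^{-k}\gtrsim n^{sk/d}$ with $k\asymp n^{1/d}$: the coefficient sum grows superpolynomially in $n$ rather than staying bounded by a constant depending only on $\|f\|_{L^p}$. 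No choice of $h$ balances the two requirements --- killing the reproduction error while respecting an $n$-independent $\ell^1$ budget --- inside the divided-difference construction; matters are made worse by the fact that the coefficients of a near-best degree-$k$ polynomial approximant, expressed in monomials or powers of linear forms, can themselves grow exponentially in $k$ (think of Chebyshev polynomials). So your plan does deliver the rate bound and the continuity/linearity of the $a_j$'s, i.e.\ the part of the theorem Mhaskar actually proves this way, but not the bound $\sum_j|a_j|\le C$ with $C$ depending only on $\|f\|_{L^p(\mathcal{X})}$; that claim requires either a genuinely different construction or a weakening in which the bound is allowed to depend on $n$ --- which is effectively how it is used downstream in Remark~\ref{rem:appr} and Corollary~\ref{Neur}, where $R$ is taken ``large enough'' and $\mathcal{A}$ proportional to $R$.
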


For any set $A\subset\mathbb{R}^{d}$ let
\[
A^{\varepsilon}=\{x\in\mathbb{R}^{d}: \rho_{A}(x)\leq\varepsilon\}, \quad
\rho_{A}(x)=\inf_{y\in A}|x-y|.
\]

\begin{lemma}
\label{lem:ind-appr} Let a set $A\subset\mathbb{R}^{d}$ be convex. Then for
any $\varepsilon>0$ there exists a infinitely differentiable function
$\varphi_{A}$ with $0\leq\phi\leq1,$ such that
\begin{align*}
\varphi_{A}(x)=
\begin{cases}
1, & x\in A,\\
0, & x\in\mathbb{R}^{d}\setminus A^{\varepsilon}%
\end{cases}
\end{align*}
and for any multiindex $\alpha= (\alpha_{1},\ldots,\alpha_{d})$
\begin{align*}
|D^{\alpha}\varphi_{A}(x)|\leq\frac{C_{\alpha}}{\varepsilon^{|\alpha|}},\quad
x\in\mathbb{R}^{d}%
\end{align*}
with a constant $C_{\alpha}>0.$
\end{lemma}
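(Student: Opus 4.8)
The plan is to construct $\varphi_A$ by mollifying the indicator of a slightly shrunk neighborhood of $A$. First I would fix once and for all a base mollifier $\eta\in C^\infty(\mathbb{R}^d)$ with $\eta\geq 0$, $\operatorname{supp}\eta\subset\{|x|\leq 1/2\}$ and $\int_{\mathbb{R}^d}\eta(x)\,dx=1$, and set $\eta_\varepsilon(x):=\varepsilon^{-d}\eta(x/\varepsilon)$, so that $\eta_\varepsilon\geq 0$, $\int\eta_\varepsilon=1$ and $\operatorname{supp}\eta_\varepsilon\subset\{|x|\leq \varepsilon/2\}$. I would then define
\[
\varphi_A:=1_{A^{\varepsilon/2}}*\eta_\varepsilon,\qquad \varphi_A(x)=\int_{\mathbb{R}^d}1_{A^{\varepsilon/2}}(x-y)\,\eta_\varepsilon(y)\,dy .
\]
Here $A^{\varepsilon/2}=\{x:\rho_A(x)\leq \varepsilon/2\}$ is closed, hence measurable, because $\rho_A$ is $1$-Lipschitz; in particular no regularity of $A$ is needed for the definition, and convexity of $A$ is in fact never used in the argument.

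Next I would verify the two boundary conditions using the $1$-Lipschitz property of $\rho_A$ together with the splitting $\varepsilon=\varepsilon/2+\varepsilon/2$. If $x\in A$ and $|y|\leq\varepsilon/2$, then $\rho_A(x-y)\leq\rho_A(x)+|y|=|y|\leq\varepsilon/2$, so $x-y\in A^{\varepsilon/2}$ for every $y\in\operatorname{supp}\eta_\varepsilon$; hence $\varphi_A(x)=\int\eta_\varepsilon=1$. Conversely, if $x\notin A^{\varepsilon}$ (i.e.\ $\rho_A(x)>\varepsilon$) and $|y|\leq\varepsilon/2$, then $\rho_A(x-y)\geq\rho_A(x)-|y|>\varepsilon-\varepsilon/2=\varepsilon/2$, so $x-y\notin A^{\varepsilon/2}$ and the integrand vanishes, giving $\varphi_A(x)=0$. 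Since $0\leq 1_{A^{\varepsilon/2}}\leq 1$ and $\eta_\varepsilon\geq 0$ integrates to $1$, we also read off $0\leq\varphi_A\leq 1$ everywhere.

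Finally I would establish smoothness and the derivative bounds. Because $\eta_\varepsilon\in C^\infty_c(\mathbb{R}^d)$ while $1_{A^{\varepsilon/2}}$ is bounded and locally integrable, differentiation under the integral sign is justified and yields $\varphi_A\in C^\infty(\mathbb{R}^d)$ with $D^\alpha\varphi_A=1_{A^{\varepsilon/2}}*D^\alpha\eta_\varepsilon$. Using $0\leq 1_{A^{\varepsilon/2}}\leq 1$, the scaling identity $D^\alpha\eta_\varepsilon(y)=\varepsilon^{-d-|\alpha|}(D^\alpha\eta)(y/\varepsilon)$, and the substitution $z=y/\varepsilon$, I obtain
\[
|D^\alpha\varphi_A(x)|\leq\int_{\mathbb{R}^d}|D^\alpha\eta_\varepsilon(y)|\,dy=\varepsilon^{-|\alpha|}\int_{\mathbb{R}^d}|D^\alpha\eta(z)|\,dz=\frac{C_\alpha}{\varepsilon^{|\alpha|}},
\]
with $C_\alpha:=\|D^\alpha\eta\|_{L^1}$ depending only on the fixed base mollifier and on $\alpha$, as required.

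The construction is entirely routine and there is no serious obstacle. The only point requiring genuine care is the choice of mollification radius $\varepsilon/2$ (rather than $\varepsilon$): this is exactly what lets the two requirements — being identically $1$ on all of $A$ and identically $0$ outside $A^\varepsilon$ — hold simultaneously, via the two triangle-inequality estimates above. It is worth emphasizing that the hypothesis of convexity plays no role in the proof; it is presumably retained only because the lemma is applied to stopping regions, and convexity is what guarantees the measure-regularity condition \eqref{eq:reg-ps} in Remark~\ref{rem:appr}.
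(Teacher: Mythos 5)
Your proof is correct. For the record, the paper states this lemma \emph{without proof} (it appears in the appendix as a standard fact, used only in the proof of Corollary~\ref{cor:ind-appr}), so there is no authors' argument to compare against; your mollification construction is the standard one and fills the gap completely. All the essential points are handled correctly: the intermediate set $A^{\varepsilon/2}$ is closed (hence measurable) because $\rho_A$ is $1$-Lipschitz, the two triangle-inequality estimates give $\varphi_A\equiv 1$ on $A$ and $\varphi_A\equiv 0$ off $A^{\varepsilon}$, and the scaling identity for $D^{\alpha}\eta_{\varepsilon}$ yields $C_{\alpha}=\lVert D^{\alpha}\eta\rVert_{L^{1}}$, a constant depending only on $\alpha$ and the fixed base mollifier and not on $A$ or $\varepsilon$ --- which is exactly what the application requires, since there $\varepsilon$ is later sent to $0$. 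Your observation that convexity of $A$ is never used is also accurate: the construction works for an arbitrary subset of $\mathbb{R}^{d}$. Your guess as to \emph{why} convexity is assumed is speculative, though: in the paper the regularity condition \eqref{eq:reg-ps} is imposed as a separate hypothesis on the measures $P_{X_j}$ rather than deduced from convexity, and the convexity assumption in Corollary~\ref{cor:ind-appr} is simply inherited from the statement of this lemma.
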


\begin{corollary}
\label{cor:ind-appr} Let $S\subseteq\mathbb{R}^{d}$ be convex and let function
$\psi$ satisfy the conditions of Theorem~\ref{thm:approx}. Then for any fixed
$s>d,$ there are $r \times d$ real matrices $\{A_{j}\}_{j = 1}^{n}$ and $b
\in\mathbb{R}^{r}$ with the following property
\[
\left\|  1_{S}(\cdot)- \sum\limits_{i = 1}^{n} a_{i}(S)\psi(A_{i}(\cdot) +
b)\right\|  _{L_{2}(\pi)} \le\frac{C_{s}}{\varepsilon^{s} n^{s/d}}+\sqrt
{\pi(S\setminus S^{\varepsilon})}
\]
for some constant $C_{s}>0$ and some real numbers $a_{1},\ldots,a_{n}$
depending on $S$ such that $\sum_{i=1}^{n}|a_{i}| \leq Q$ where $Q$ is an
absolute constant.
\end{corollary}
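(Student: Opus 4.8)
The plan is to interpolate between the indicator $1_{S}$ and the required approximant through a single smooth intermediary, namely the mollified indicator $\varphi_{S}$ provided by Lemma~\ref{lem:ind-appr}. Since $S$ is convex, that lemma applies and yields, for the given $\varepsilon>0$, an infinitely differentiable function $\varphi_{S}$ with $0\le\varphi_{S}\le1$, equal to $1$ on $S$ and to $0$ off $S^{\varepsilon}$, and satisfying $|D^{\alpha}\varphi_{S}(x)|\le C_{\alpha}\varepsilon^{-|\alpha|}$. I would then split the target error by the triangle inequality,
\[
\Bigl\| 1_{S}-\sum_{i=1}^{n} a_{i}\psi(A_{i}(\cdot)+b)\Bigr\|_{L_{2}(\pi)}\le\|1_{S}-\varphi_{S}\|_{L_{2}(\pi)}+\Bigl\|\varphi_{S}-\sum_{i=1}^{n} a_{i}\psi(A_{i}(\cdot)+b)\Bigr\|_{L_{2}(\pi)},
\]
and estimate the two terms independently.

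For the first term, note that $\varphi_{S}$ coincides with $1_{S}$ on $S$ and both vanish outside $S^{\varepsilon}$, so the difference $1_{S}-\varphi_{S}$ is supported on the boundary layer $S^{\varepsilon}\setminus S$ (the set on which the two functions disagree) and is bounded there by $1$. Hence $\|1_{S}-\varphi_{S}\|_{L_{2}(\pi)}^{2}\le\pi(S^{\varepsilon}\setminus S)$, which furnishes the square-root term of the asserted bound. For the second term, I would apply Theorem~\ref{thm:approx} (Mhaskar) to $f=\varphi_{S}$ with $p=2$: this produces fixed matrices $\{A_{i}\}$ and a fixed shift $b$ (independent of $f$, hence valid for $\varphi_{S}$) together with coefficients $a_{i}=a_{i}(\varphi_{S})$ for which $\|\varphi_{S}-\sum_{i}a_{i}\psi(A_{i}(\cdot)+b)\|_{L^{2}(\mathcal{X})}\le c\,\|\varphi_{S}\|_{W^{s,2}(\mathcal{X})}\,n^{-s/d}$.

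The crux is then to control $\|\varphi_{S}\|_{W^{s,2}(\mathcal{X})}$ in terms of $\varepsilon$. Using the derivative estimates $|D^{\alpha}\varphi_{S}|\le C_{\alpha}\varepsilon^{-|\alpha|}$ from Lemma~\ref{lem:ind-appr}, together with the observation that all derivatives of positive order are supported in the thin layer $S^{\varepsilon}\setminus S$, the dominant contribution comes from the top-order derivatives $|\alpha|=s$ and gives $\|\varphi_{S}\|_{W^{s,2}(\mathcal{X})}\lesssim\varepsilon^{-s}$; substituting yields the $C_{s}\varepsilon^{-s}n^{-s/d}$ term. Passing from the Lebesgue $L^{2}(\mathcal{X})$-norm to $L_{2}(\pi)$ costs only a multiplicative constant under the (implicit) boundedness of the density of $\pi$ on the bounded domain $\mathcal{X}$, which is absorbed into $C_{s}$. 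Finally, the coefficient bound $\sum_{i}|a_{i}|\le Q$ is inherited directly from Theorem~\ref{thm:approx}, whose conclusion gives $\sum_{i}|a_{i}|\le C$ with $C$ depending only on $\|\varphi_{S}\|_{L^{2}(\mathcal{X})}$; since $0\le\varphi_{S}\le1$ on the bounded $\mathcal{X}$, this norm is uniformly bounded and $Q$ is an absolute constant.

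The main obstacle is the $\varepsilon$-tracking inside the Sobolev norm: one must verify that the blow-up of the high derivatives of the mollifier is exactly the power $\varepsilon^{-s}$ appearing in the statement, and that the lower-order terms and the factor $\lambda(S^{\varepsilon}\setminus S)^{1/2}$ from the support of the derivatives do not worsen this power. A secondary, more technical point is justifying that $\varphi_{S}\in W^{s,2}(\mathcal{X})$ with the stated quantitative norm bound on the relevant bounded domain, and that the passage from the Lebesgue norm to the $\pi$-weighted norm is indeed harmless; both of these are routine once the density-boundedness convention is fixed.
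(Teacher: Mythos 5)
Your skeleton is the same as the paper's: mollify $1_{S}$ by the smooth $\varphi_{S}$ of Lemma~\ref{lem:ind-appr}, split by the triangle inequality, charge the boundary layer to $\sqrt{\pi(S^{\varepsilon}\setminus S)}$ (your orientation of the set difference is the sensible reading of the paper's $S\setminus S^{\varepsilon}$), and approximate $\varphi_{S}$ by Theorem~\ref{thm:approx} with Sobolev norm of order $\varepsilon^{-s}$. The place where you diverge, and where your argument has a genuine gap, is the exponent $p$ in Theorem~\ref{thm:approx}. You take $p=2$, so Mhaskar's theorem gives you control of $\varphi_{S}-\sum_{i}a_{i}\psi(A_{i}(\cdot)+b)$ only in the \emph{Lebesgue} $L^{2}(\mathcal{X})$ norm, and you must then pass to $L_{2}(\pi)$. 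That passage requires $\pi$ to be absolutely continuous with a bounded density — an assumption that appears nowhere in the corollary (the appendix only posits a probability measure $\pi$ on $\mathcal{X}$) and that you yourself flag as ``implicit.'' It is not harmless: for a measure $\pi$ with an atom, or with unbounded density concentrating mass near $\partial S$, no inequality of the form $\|f\|_{L_{2}(\pi)}\lesssim\|f\|_{L^{2}(\lambda)}$ holds, and your second term is simply not controlled. The corollary is invoked in Remark~\ref{rem:appr} with $\pi=P_{X_{j}}$, for which no such density bound is assumed, so the gap propagates to the application.

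The paper's proof avoids this entirely by applying Theorem~\ref{thm:approx} with $p=\infty$: one gets the \emph{uniform} bound
\[
\sup_{x}\Bigl|\varphi_{S}(x)-\sum_{i=1}^{n}a_{i}(S)\psi(A_{i}x+b)\Bigr|\le\frac{C_{s}}{\varepsilon^{s}n^{s/d}},
\]
and a sup-norm bound dominates the $L_{2}(\pi)$ norm for \emph{every} probability measure $\pi$, with no regularity of $\pi$ needed. The same choice also cleans up your coefficient bound: with $p=\infty$ the constant in $\sum_{i}|a_{i}|\le Q$ depends only on $\|\varphi_{S}\|_{\infty}\le1$, hence is genuinely absolute, whereas your $p=2$ route makes it depend on $\lambda(\mathcal{X})$ (so you additionally need $\mathcal{X}$ bounded). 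Your $\varepsilon$-tracking of the Sobolev norm is correct as far as it goes — the derivative bounds $|D^{\alpha}\varphi_{S}|\le C_{\alpha}\varepsilon^{-|\alpha|}$ give $\|\varphi_{S}\|_{W^{s,\infty}}\lesssim\varepsilon^{-s}$ just as well as $\|\varphi_{S}\|_{W^{s,2}}\lesssim\varepsilon^{-s}$ — so the repair is purely a matter of replacing $p=2$ by $p=\infty$ throughout; but as written, the proof does not establish the statement in its stated generality.
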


\begin{proof}
Due to Lemma~\ref{lem:ind-appr}, there is an infinitely smooth function
$\phi_{S}$ such that
\begin{align*}
\sup_{x\in\mathbb{R}^{d}} |1_{S}(x)-\phi_{S}(x)|\leq\pi(S\setminus
S^{\varepsilon}).
\end{align*}
According to Theorem~\ref{thm:approx}, we have with $p=\infty$
\begin{align*}
\sup_{x\in\mathbb{R}^{d}}\left|  \varphi_{S}(x)-\sum\limits_{i=1}^{n}%
a_{i}(S)\psi(A_{i}x+b)\right|  \le\frac{C_{s}}{\varepsilon^{s}n^{s/d}}%
\end{align*}
for some matrices $r \times d$ real matrices $\{A_{j}\}_{j = 1}^{n}$ and real
numbers $a_{1},\ldots,a_{n}$ depending on $S.$
\end{proof}

\bibliographystyle{amsplain}
\bibliography{rand-stop_new}

\end{document}